\newtheorem{thm}{Theorem}[section]
\newtheorem{lem}[thm]{Lemma}
\newtheorem{prop}[thm]{Proposition}
\newtheorem{cor}[thm]{Corollary}
\numberwithin{equation}{section}
\newcommand{\D}{\mathcal{D}}
\newcommand{\F}{\mathcal{F}}
\newcommand{\Rmnum}[1]{\expandafter\@slowromancap\romannumeral #1@}
\title{Upward-closed hereditary families in the dominance order}
\author{Michael D. Barrus\affiliationmark{1}
\and Jean A. Guillaume\affiliationmark{2}
}
\affiliation{
University of Rhode Island, United States of America\\
Sacred Heart University, United States of America
}
\keywords{forbidden subgraph, degree sequence, majorization, dominance order}
\begin{document}
\maketitle

\begin{abstract}
The majorization relation orders the degree sequences of simple graphs into posets called dominance orders. As shown by Ruch and Gutman (1979) and Merris (2002), the degree sequences of threshold and split graphs form upward-closed sets within the dominance orders they belong to, i.e., any degree sequence majorizing a split or threshold sequence must itself be split or threshold, respectively. Motivated by the fact that threshold graphs and split graphs have characterizations in terms of forbidden induced subgraphs, we define a class $\mathcal{F}$ of graphs to be \textit{dominance monotone} if whenever no realization of $e$ contains an element $\mathcal{F}$ as an induced subgraph, and $d$ majorizes $e$, then no realization of $d$ induces an element of $\F$. We present conditions necessary for a set of graphs to be dominance monotone, and we identify the dominance monotone sets of order at most 3.
\end{abstract}

\section{Introduction} \label{sec: intro}
In this paper, we study the interactions of two aspects of graph degree sequences, namely their relationships under the majorization order, and the induced subgraphs that their realizations may or must not contain.

When partitions of integers with a common sum are ordered via majorization, interesting observations are possible. Here we assume that $d=(d_1,\dots,d_n)$ and $e=(e_1,\dots,e_p)$ are lists of positive integers with their terms in nonincreasing order, and we say that $d$ \emph{majorizes} $e$, denoted $d \succeq e$, if \[\sum_{i=1}^n d_i = \sum_{i=1}^p e_i \quad \text{ and } \quad \sum_{i=1}^k e_i \leq \sum_{i=1}^k d_i \, \text{ for } \, 1 \leq k\leq \min\{p,n\}.\] 

Applying the relation $\succeq$ to all partitions of a fixed positive integer yields a lattice. Brylawski~\cite{Brylawski73} established fundamental properties of this lattice. This lattice serves as a setting for various sandpile models and related chip firing games (see, for example~\cite{Goles02}).

Majorization has a strong connection to graph degree sequences as well. As observed by Ruch and Gutman~\cite{Ruch1979} and others, all 
\emph{graphic} partitions (i.e., degree sequences of simple graphs) among these partitions form an ideal, or downward-closed set, meaning that if $d$ is a degree sequence and $d \succeq e$, then $e$ is a degree sequence as well.

If we restrict our attention to the portion of the majorization poset containing just the graphic partitions, we obtain the \emph{dominance order} on degree sequences having a common sum. Aigner and Triesch~\cite{AignerTriesch} used the dominance order in problems related to the existence of degree sequence realizations having desired properties. Berger~\cite{Berger18} showed that the number of realizations of certain degree sequences is strongly related to majorization relations among the degree sequences. Arikati and Peled studied the majorization gap of a degree sequence~\cite{ArikatiPeled} and showed that degree sequences that lie immediately below the top of the dominance order have Hamiltonian realization graphs~\cite{ArikatiPeled2}. 

The degree sequences near the top of the dominance order belong to interesting graph classes. The maximal degree sequences in the dominance order are known as the \emph{threshold sequences}, and their realizations, the \emph{threshold graphs}, have been shown to have several remarkable properties (see the monograph~\cite{Definitionthreshold} for a survey). Merris~\cite{Merris2002} showed that the more general class of \emph{split graphs}, those whose vertex sets can be partitioned into a clique and an independent set, have degree sequences that are upward-closed in the dominance order, meaning that if $e$ is the degree sequence of some split graph and $d$ is any degree sequence majorizing $e$, then every realization of $d$ is a split graph as well.

In addition to their degree sequence characterizations, the classes of threshold graphs and of split graphs both have characterizations in terms of induced subgraphs. Chv\'atal and Hammer~\cite{Chvatal77} showed that threshold graphs are precisely those graphs that are \emph{$\{2K_2,C_4,P_4\}$-free}, meaning that these graphs have no induced subgraph isomorphic to any of $2K_2$, $C_4$, or $P_4$. F\"{o}ldes and Hammer~\cite{Ham1977} likewise showed that the split graphs are the $\{2K_2,C_4,C_5\}$-free graphs.

Recently \cite{WeaklyThreshold}, the \emph{weakly threshold graphs} were introduced by the first author as those graphs for which the degree sequences satisfied a relaxation of a degree sequence characterization of threshold graphs. Weakly threshold graphs form a subclass of the split graphs, and like the split and threshold graphs, they have a forbidden subgraph characterization and the property that any degree sequence majorizing the degree sequence of a weakly threshold graph is itself the degree sequence of a weakly threshold graph. 

In light of these examples, it appears that we may better understand one facet of the dominance order by considering hereditary graph classes like the threshold, split, and weakly threshold graphs whose degree sequences form upward-closed sets in the dominance order. To do this, we will focus on the corresponding sets of forbidden induced subgraphs. We define a set $\F$ of graphs to be \textit{dominance monotone} if the following property is true:
\begin{quote}
    If $d$ and $e$ are degree sequences such that $d \succeq e$ and every realization of $e$ is $\F$-free, then every realization of $d$ is $\F$-free as well.
\end{quote}
In other words, $\F$ is dominance monotone if the forcibly $\F$-free-graphic sequences form an upward-closed set in each dominance order (precise definitions will be given in the following section).

In this paper we initiate the study of dominance monotone sets, establishing necessary conditions and determining all dominance monotone sets of size at most 3. In Section 2, we recall preliminary notation, definitions, and results on degree sequences, majorization, and forbidden subgraphs. In Section 3 we determine necessary conditions for graphs in dominance monotone sets and use these conditions to determine the dominance monotone sets of order 1. In Sections 4 and 5 we characterize the dominance monotone sets $\mathcal{F}$ for which $|\mathcal{F}|= 2$ and $|\F|=3$, respectively, including the first known dominance monotone examples $\F$ for which the $\F$-free graphs are not a subclass of the split graphs. In Section 6 we present a few concluding remarks and questions.

\section{Preliminaries}
In this section, we recall basic terminology and notions for degree sequences and related concepts. 

All graphs considered here are finite and simple. We use $K_n$, $C_n$, and $P_n$, respectively, to denote the complete graph, the cycle graph, and the path graph having $n$ vertices. We denote the disjoint union of graphs $G$ and $H$ by $G+H$ and the disjoint union of $a$ copies of $G$ by $aG$. We use $G \vee H$ to indicate the join of graphs $G$ and $H$.

We denote the vertex set and edge set of a graph $G$, respectively, by $V(G)$ and $E(G)$, and we define $n(G)=|V(G)|$. We use $\overline{G}$ to denote the complement of $G$. For any $v \in V(G)$, we use $d_G(v)$ to denote the degree of $v$ in $G$, and we write the degree sequence of $G$ as a list $d_G=(d_1, d_2,\dots, d_n)$ having terms in nonincreasing order. At times, particularly when degree sequences appear in pairs, we will write specific degree sequences with small terms without parentheses or commas, as in $d=d_1d_2\cdots d_n$. For multiple identical terms within a degree sequence we may use exponents to indicate multiplicities. We set $\Delta(G)=d_1$ and $\delta(G)=d_n$. Any vertex of $G$ having degree $n(G)-1$ will be called a \emph{dominating vertex}, and any vertex having degree $0$ will be called an \emph{isolated vertex}.

Any graph having such a list $d$ as its degree sequence is called a \textit{realization} of $d$. (Graphs in this paper are unlabeled, meaning that we are not careful to distinguish between isomorphic realizations of a degree sequence).

Turning now to majorization, we use $\D_{2m}$ to denote the dominance order on graphic partitions of $2m$, where $m$ is an integer; it is an elementary result that the sum of the terms in any degree sequence is an even number. We will assume that all terms in elements of $\D_{2m}$ are positive; though of course some graphs do contain isolated vertices, we emphasize that realizations of elements in $\D_{2m}$ are assumed not to.

We may illustrate degree sequences in $\D_{2m}$ and their relationships under majorization using a geometric description known as a \emph{Ferrers diagram}. For $d=(d_1,\dots,d_n) \in \D_{2m}$, define the Ferrers diagram $F(d)$ as a left-justified array made up of $2m$ boxes arranged into rows, with the $i$th row of $F(d)$ consisting of $d_i$ boxes for $i \in \{1,\dots,n\}$. As an illustration, Figure~\ref{fig: Ferrers} displays the Ferrers diagrams of $d= 3221$ and $d'=2222.$ 

\begin{figure}
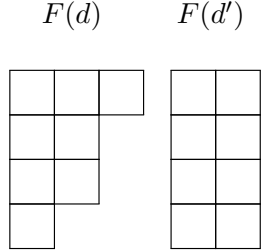

    \centering
$\displaystyle \begin{array}{cc} F(d) \,\,  &  F(d')\,\, \\  \,\, & \,\, \\ \ydiagram{3,2,2,1} &  \ydiagram{2,2,2,2} \end{array}$
    \caption{Ferrers diagrams of $d=3221$ and $d'=2222$.}
    \label{fig: Ferrers}
\end{figure}

We recall a fundamental result on partitions, stating it in terms of Ferrers diagrams.

\begin{lem}[Muirhead's Lemma, \cite{Muirhead03}]
Two degree sequences $d,d' \in \D_{2m}$ satisfy $d \succeq d'$ if and only if $F(d')$ can be obtained from $F(d)$ by moving one or more boxes down to lower rows (even if this process gives rise to new rows) while ensuring that the numbers of boxes in the rows remain in nonincreasing order.
\end{lem}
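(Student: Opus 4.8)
The plan is to prove both implications, treating ``moving boxes down implies majorization'' as routine and reserving the real work for the converse. For the easy direction, suppose $F(d')$ arises from $F(d)$ by a sequence of moves, each sending a single box from some row to a strictly lower row while keeping every intermediate diagram a valid Ferrers diagram. Writing $S_k(\cdot)=\sum_{i=1}^k(\cdot)_i$ for the $k$th partial sum, I observe that moving one box from row $a$ down to row $b$ with $a<b$ leaves $S_k$ unchanged for $k<a$ and for $k\geq b$, and decreases $S_k$ by exactly $1$ for $a\leq k<b$. Hence each move preserves the total number of boxes and can only decrease partial sums; composing all the moves gives $S_k(d')\leq S_k(d)$ for every $k$, with equality at $k=\max\{n,p\}$, which is exactly $d\succeq d'$.

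For the converse, assume $d\succeq d'$ and argue by induction on the nonnegative integer $\sigma(d,d')=\sum_{k\geq 1}\bigl(S_k(d)-S_k(d')\bigr)$, where partial sums are taken with trailing zeros appended so that $d$ and $d'$ share a common length. Majorization makes each summand nonnegative, and $\sigma(d,d')=0$ holds precisely when $S_k(d)=S_k(d')$ for all $k$, i.e.\ when $d=d'$; this is the base case, requiring no moves. When $d\neq d'$, the goal is to exhibit a single downward box-move transforming $F(d)$ into a diagram $F(d'')$ with $d\succeq d''\succeq d'$ and $\sigma(d'',d')<\sigma(d,d')$, after which the inductive hypothesis finishes the argument. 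To locate the move, let $r$ be the least index with $d_r\neq d'_r$; since the partial sums agree below $r$, majorization forces $d_r>d'_r$, so $S_r(d)>S_r(d')$. Because the total sums coincide, the differences $S_k(d)-S_k(d')$ must return to $0$, so there is a least index $s>r$ with $S_s(d)=S_s(d')$; at this index $d_s<d'_s$, and $S_k(d)>S_k(d')$ strictly for all $r\leq k<s$.

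The main obstacle is combinatorial bookkeeping: I must relocate a box downward while simultaneously keeping row lengths nonincreasing, not overshooting below $d'$, and strictly decreasing $\sigma$. The strict inequalities $S_k(d)>S_k(d')$ on the band $r\leq k<s$ handle the second requirement, since removing a box from a row $a$ in this band and placing it in a row $b\leq s$ changes the partial sums only for $a\leq k<b$, where they drop by one but stay at least $S_k(d')$, while $S_k$ for $k\geq s$ is untouched; thus $d''\succeq d'$, and the same computation shows $\sigma$ drops by $b-a\geq 1$. The remaining subtlety is preserving the Ferrers shape after the decrement and increment, which I resolve with the standard device of decrementing the \emph{bottom} row of the block of equal parts containing row $r$ and incrementing the \emph{top} row of the block of equal parts containing row $s$. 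Here the key observation is that $d_r>d_s$: otherwise $d_r=d_s$ would combine with $d_r>d'_r\geq d'_s>d_s$ to contradict that $d'$ is nonincreasing. Consequently these two blocks are disjoint and correctly ordered, the chosen rows lie in the admissible band, and both operations leave a legal diagram. With the single-box reduction established, induction on $\sigma$ assembles the full sequence of downward box moves, completing the proof.
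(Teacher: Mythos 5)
Your proof is correct, but there is an important caveat about the comparison: the paper offers no proof of this statement at all. It is quoted as a classical result with a citation to Muirhead (1903), so your argument is not an alternative route but the only route on display. What you give is the standard self-contained proof: the easy direction via the partial-sum effect of a single downward box move, and the converse by induction on the total deficiency $\sigma(d,d')=\sum_k\bigl(S_k(d)-S_k(d')\bigr)$, locating the band $r\leq k<s$ of strict inequality and moving one box from the bottom of the block containing row $r$ to the top of the block containing row $s$. This buys the paper self-containedness, and the single-move decomposition is exactly the operation the authors invoke informally right after the lemma (moving a box to an earlier or later row), so it fits the paper's usage well. One spot deserves an explicit extra line: when the two blocks are adjacent, i.e.\ the incremented row $b$ equals $a+1$ where $a$ is the decremented row, keeping the row lengths nonincreasing requires $d_a-1\geq d_b+1$, that is $d_r\geq d_s+2$, which is strictly stronger than the inequality $d_r>d_s$ you isolate as the key observation, and your sentence ``both operations leave a legal diagram'' does not follow from disjointness of the blocks alone in that case. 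Fortunately the chain you already wrote down, $d_r>d'_r\geq d'_s>d_s$, contains two strict inequalities between integers and therefore yields $d_r\geq d_s+2$ unconditionally; stating that explicitly closes the only gap, and the induction on $\sigma$ then assembles the sequence of moves exactly as you describe. (A final cosmetic note: the lemma as stated concerns graphic partitions in $\D_{2m}$, but your proof correctly works at the level of arbitrary partitions, which is the right generality, since intermediate diagrams in the move sequence need not be graphic.)
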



In Figure~\ref{fig: Ferrers}, moving a box from the first row  to the fourth row of $F(d)$ yields $F(d');$ hence, $3221 \succeq 2222.$ 

We say that a class of elements in a dominance order $\D_{2m}$ is \emph{upward-closed} if whenever $d$ and $e$ are elements of $\D_{2m}$ such that $e$ belongs to the class and  $d\succeq e$, it follows that $d$ belongs to the class as well. For an upward-closed class of degree sequences, Muirhead's Lemma implies that moving any box in the Ferrers diagram of one of these degree sequences to an earlier row produces the Ferrers diagram either of another degree sequence in the class or of a non-graphic partition.

When we consider realizations of degree sequences, it is important to note that a single degree sequence may have multiple nonisomorphic realizations. For this reason, for any graph-theoretic property $\mathcal{P}$ invariant under isomorphism, we say that a degree sequence $d$ is \emph{potentially $\mathcal{P}$-graphic}, or \emph{potentially $\mathcal{P}$}, if at least one of the realizations of $d$ has property $\mathcal{P}$. If every realization of $d$ has property $\mathcal{P}$, we say that $d$ is \emph{forcibly $\mathcal{P}$-graphic}, or \emph{forcibly $\mathcal{P}$}. Thus if $\F$ is a collection of graphs, we say that a degree sequence $d$ is forcibly $\F$-free if no realization of $d$ contains any element of $\F$ as an induced subgraph.

\section{Necessary conditions and dominance monotone singletons}
We work now towards characterizing dominance monotone sets. Recall that a collection $\F$ of graphs is dominance monotone if the class of forcibly $\F$-free sequences is upward-closed in each dominance order $\D_{2m}$.

Since our objective is to identify the dominance monotone sets, we say that a pair $(d, e)$ of degree sequences is a \emph{counterexample pair for $\F$} if $d \succeq e$ and $e$ is forcibly $\F$-free, but $d$ is not, i.e., $d$ has a realization containing an element of $\F$ as an induced subgraph. There is a counterexample pair for $\F$ if and only if $\F$ is not dominance monotone.

For example, the set $\F=\{2K_2, C_4\}$ is not dominance monotone, since the dominance order $\D_{10}$ yields the counterexample pair $(32221, 2^5)$, in which $2^5$ has the chordless 5-cycle (which contains neither $2K_2$ nor $C_4$ as induced subgraphs) as its only realization, and $32221$ has as one of its realizations a chordless 4-cycle with an attached pendant vertex. Since the set $\{2K_2,C_4\}$ is the set of induced subgraphs forbidden for the \emph{pseudo-split graphs}, which further have a degree sequence characterization (see~\cite{Maffray94}), we see that not every hereditary family with a degree sequence characterization forbids a dominance monotone set; more importantly, we also see that dominance monotone sets like $\{2K_2,C_4,C_5\}$ and $\{2K_2,C_4,P_4\}$ may contain non-dominance monotone subsets.

Our first result deals with complements. We use $\overline{G}$ to denote the complement of a graph $G$, and, given a collection $\F$ of graphs, we define $\overline{\F}=\{\overline{F}: F \in \F\}$. 

\begin{thm}\label{thmcomp}
If $\F$ is dominance monotone and no graph in $\mathcal{F}$ has a dominating vertex, then $\overline{\F}=\{\overline{F}: F \in \F\}$ is dominance monotone as well.
\end{thm}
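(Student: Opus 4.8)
The plan is to argue by contradiction, transporting a putative failure of dominance monotonicity for $\overline\F$ across the complementation map to a failure for $\F$. Assume $\F$ is dominance monotone and that no graph in $\F$ has a dominating vertex, yet $\overline\F$ is \emph{not} dominance monotone. Then there is a counterexample pair $(D,E)$ for $\overline\F$: we have $D\succeq E$ (so in particular $D$ and $E$ share a common even sum $2m$, and $D$ has at most as many terms as $E$), every realization of $E$ is $\overline\F$-free, and some realization $H$ of $D$ contains an induced copy of some $\overline F\in\overline\F$. I will build from this a counterexample pair for $\F$.

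Two facts underlie the transport. First, for graphs on a common vertex set $G$ induces $F$ if and only if $\overline G$ induces $\overline F$; hence taking complements gives a bijection between the realizations of a degree sequence $d=(d_1,\dots,d_n)$ and those of its vertex-complement $\hat d$, where $\hat d_i=(n-1)-d_{n+1-i}$, and this bijection interchanges $\F$-freeness with $\overline\F$-freeness. Second, complementation preserves dominance among sequences of a fixed length: if $d\succeq e$ with $d,e$ of length $n$, then $\hat d\succeq\hat e$, which follows from a one-line comparison of partial sums (equivalently, from Muirhead's Lemma). Finally, a vertex is dominating in $G$ precisely when the corresponding vertex is isolated in $\overline G$, so the hypothesis that no graph in $\F$ has a dominating vertex is equivalent to the statement that no graph in $\overline\F$ has an isolated vertex.

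The genuine obstacle is purely bookkeeping: $D$ and $E$ may differ in length, and either may have a dominating vertex, and any of these would force $\hat D$ or $\hat E$ to acquire a zero term and hence leave the dominance orders. This is exactly where the hypothesis is used. Let $p$ be the number of terms of $E$ and set $N=p+1$. Because every graph in $\overline\F$ has no isolated vertex, adjoining one isolated vertex to a realization of $E$ cannot create an induced $\overline F$; thus the sequence $E'$ obtained from $E$ by appending a single $0$ remains forcibly $\overline\F$-free. Pad $D$ with isolated vertices to a sequence $D^+$ of length $N$; appending isolated vertices cannot destroy an induced subgraph, so $D^+$ still has a realization (namely $H$ with isolated vertices adjoined) inducing $\overline F$, and $D^+\succeq E'$ still holds since appending zeros does not change partial sums. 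Now $\Delta(D^+)=\Delta(D)\le p-1<N-1$ and $\Delta(E')=\Delta(E)\le p-1<N-1$, so neither $D^+$ nor $E'$ has a dominating vertex; consequently $\widehat{D^+}$ and $\widehat{E'}$ have strictly positive terms and lie in the dominance order $\D_{N(N-1)-2m}$.

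It remains to read off the contradiction. By order-preservation, $\widehat{D^+}\succeq\widehat{E'}$. By the realization bijection, each realization of $\widehat{E'}$ is the complement of a realization of $E'$ and is therefore $\F$-free, so $\widehat{E'}$ is forcibly $\F$-free; on the other hand the complement of the padded $H$ realizes $\widehat{D^+}$ and induces $\overline{\overline F}=F$, so $\widehat{D^+}$ is not forcibly $\F$-free. Hence $(\widehat{D^+},\widehat{E'})$ is a counterexample pair for $\F$, contradicting that $\F$ is dominance monotone. The step I expect to require the most care is the padding in the previous paragraph, since keeping the complemented sequences free of zero terms while simultaneously preserving both the majorization relation and the forbidden induced subgraph is precisely what forces the no-dominating-vertex hypothesis.
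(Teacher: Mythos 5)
Your proof is correct, and its core machinery coincides with the paper's: complement both sequences and verify $\widehat{D^+}\succeq\widehat{E'}$ by exactly the tail-sum computation the paper displays. The genuine difference is organizational, and it matters. The paper argues directly and splits into two cases according to whether $e_1<p-1$ or $e_1=p-1$; in the second case it appends a term $p$ and raises every complemented term by one, which is precisely the complement-side picture of your move of appending a single isolated vertex to $E$ before complementing. You instead apply this padding \emph{uniformly} (always pass to length $N=p+1$), and both arguments spend the no-dominating-vertex hypothesis at the corresponding spot: you use that no graph in $\overline{\F}$ has an isolated vertex, so an induced $\overline{F}$ cannot use the appended vertex and $E'$ stays forcibly $\overline{\F}$-free, while the paper uses the dual fact that no $F\in\F$ has a dominating vertex, so an induced $F$ in a realization of $\overline{e}'$ must avoid the added dominating vertex. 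What your uniformization buys is robustness at a boundary that the paper's first case glosses over: if $n=p$ and $d_1=p-1$ (which can happen even when $e_1<p-1$, e.g.\ $d=3221\succeq 2222=e$), the paper's sequence $\overline{d}$ acquires the zero term $p-1-d_1$, contrary to its assertion that every term of $\overline{d}$ is positive, and that subcase really requires the second construction anyway; in your version $\Delta(D^+)\le p-1<N-1$ and $\Delta(E')\le p-1<N-1$ force all complemented terms to be positive with no case analysis at all. What the paper's split buys in exchange is a slightly leaner construction (no extra vertex) in the generic case. The remaining ingredients of your argument --- $n\le p$ from the majorization, the realization bijection under complementation, and the counterexample-pair framing, which the paper itself notes is equivalent to failure of dominance monotonicity --- are all sound as you use them.
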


\begin{proof} Assume that $\F$ is dominance monotone and contains no graph with a dominating vertex. Suppose that $e= (e_1, \dots, e_p)$ is forcibly $\overline{\F}$-free and $d \succeq e$, where $d=(d_1, \dots, d_n)$. 

Suppose first that $e_1 < p-1$. Form $\overline{e} = (p-1-e_p, \dots , p-1-e_1)$, the degree sequence of the complement of any realization of $e$, noting that every term of $\overline{e}$ is positive. Muirhead's Lemma implies that $n \leq p$. Now form $\overline{d} = ((p-1)^{p-n},p-1-d_n, \dots, p-1-d_1)$; this is the degree sequence of the graph formed by adding $p-n$ isolated vertices to a realization of $d$ and then taking the complement of the resulting graph. 

Note that $\overline{e}$ is forcibly $\F$-free. Since every term in $\overline{e}$ or $\overline{d}$ is positive, and $\overline{e}$ and $\overline{d}$ are both partitions of $p(p-1)-\sum e_i$, they belong to the same dominance order; furthermore, for each $k \in \{1,\dots,p\},$ 
\begin{multline*}
    \sum_{i=1}^k \overline{e_i} = k(p-1) - \sum_{i=p+1-k}^{p} e_{i} = k(p-1) - \left(p(p-1) - \sum_{i=1}^{p-k} e_{i}\right) = \sum_{i=1}^{p-k} e_{i} - (p-k)(p-1)\\
    \leq  \sum_{i=1}^{p-k} d_{i} - (p-k)(p-1) = \sum_{i=1}^k \overline{d_i}.
\end{multline*}
Hence $\overline{d}$ majorizes $\overline{e}$. Since $\F$ is dominance monotone, $\overline{d}$ is forcibly $\mathcal{F}$-free, and the complement of any of its realizations is forcibly $\overline{\mathcal{F}}$-free. It follows that $d$ is forcibly $\overline{\mathcal{F}}$-free, as claimed.

Suppose now that $e_1=p-1$. Form $\overline{e}' = (p,p-e_p,\dots,p-e_1)$ and $\overline{d}' = (p^{p-n+1},p-d_n,\dots,p-d_1)$; these are precisely the sequences $\overline{e}$ and $\overline{d}$ from the previous paragraph, but with each term increased by one and an extra term of $p$ inserted at the beginning. Each term of $\overline{e}'$ and of $\overline{d}'$ is positive, and similar arguments to those above show that $\overline{d}' \succeq \overline{e}'$. If $\overline{e}'$ is forcibly $\F$-free, then $\overline{d}'$ will be forcibly $\mathcal{F}$-free and hence $d$ will be forcibly $\overline{\F}$-free, as desired, since any realization of $d$ is an induced subgraph of some realization of the complement of $\overline{d}'$. It suffices, then, to note that any realization of $\overline{e}'$ is obtained by adding a dominating vertex to the complement of a realization of $e$. Since no graph in $\F$ has a dominating vertex, if $\overline{e}'$ induces an element of $\F$, the vertices of this induced subgraph must include only vertices not of degree $p$ in $\overline{e}'$. However, the subgraph induced on such vertices is the complement of an $\overline{\F}$-free graph, a contradiction.
\end{proof}

\begin{thm}\label{thmmaxdegree1} In every dominance monotone set, the graph with the lowest number of edges has maximum degree less than or equal to 1.\end{thm}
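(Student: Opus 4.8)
The plan is to argue the contrapositive: assuming $\F$ is dominance monotone, I will show that the minimum edge count $m=\min_{F\in\F}|E(F)|$ is attained by some graph of maximum degree at most $1$. It is convenient to first reformulate the hypothesis. Since $\F$ being dominance monotone means the forcibly $\F$-free sequences are upward-closed in each $\D_{2j}$, the \emph{non}-forcibly-$\F$-free sequences (those with a realization inducing some member of $\F$) form a \emph{downward}-closed set in each $\D_{2j}$.

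The key observation concerns the order $\D_{2m}$ at the minimum edge count. Every realization of a sequence in $\D_{2m}$ has exactly $m$ edges and, by our convention, no isolated vertices; and every member of $\F$ has at least $m$ edges. So if a realization $H\in\D_{2m}$ induces some $F\in\F$, then $|E(F)|\le|E(H)|=m\le|E(F)|$ forces equality, whence the copy of $F$ uses all edges of $H$ and (as $H$ has no isolated vertex) must span all of $H$; thus $H\cong F$ and $F$ has no isolated vertex. Conversely any such $H$ induces itself. Therefore a sequence in $\D_{2m}$ fails to be forcibly $\F$-free exactly when it is the degree sequence of a minimum-edge member of $\F$ having no isolated vertices.

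I then dispose of the main case. Suppose, for contradiction, that no minimum-edge graph in $\F$ has maximum degree at most $1$; in particular $mK_2\notin\F$, since $mK_2$ would be a minimum-edge graph of maximum degree $1$. If some minimum-edge $G\in\F$ has no isolated vertex, then $d_G\in\D_{2m}$ is not forcibly $\F$-free by the observation above. Because $1^{2m}$, the degree sequence of the matching $mK_2$, is the unique minimum of $\D_{2m}$, we have $d_G\succeq 1^{2m}$, so downward-closure makes $1^{2m}$ not forcibly $\F$-free as well. By the observation this forces the unique realization $mK_2$ of $1^{2m}$ into $\F$, contradicting $mK_2\notin\F$.

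The remaining, and hardest, case is when \emph{every} minimum-edge graph in $\F$ carries an isolated vertex, so that $\D_{2m}$ yields no information (its non-forcibly-free set being empty). Writing such a graph as $G=G_0+kK_1$ with $k\ge1$ and $\Delta(G_0)=\Delta(G)\ge2$, I expect to build a counterexample pair in a higher order $\D_{2j}$: realize $G$ as an induced subgraph of a graph $H$ (routing the isolated vertices into edges of an otherwise sparse scaffold) to obtain a non-forcibly-$\F$-free $d=d_H$, and then exhibit a forcibly-$\F$-free $e\preceq d$. The difficulty is that the bottom sequence $1^{2j}$ need no longer be forcibly $\F$-free—$\F$ may contain a matching $aK_2$ with $m<a\le j$—so $e$ must be a genuinely intermediate sequence whose realizations are simultaneously too sparse to contain the larger matchings of $\F$ and too structured to induce $G$; the configuration $C_5\preceq(3,3,2,1,1)$ witnessing that $\{P_3+K_1,3K_2\}$ is not dominance monotone is the prototype I would try to generalize. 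Guaranteeing the existence of such an $e$, forcibly $\F$-free yet strictly below a realization of $G$, is the main obstacle.
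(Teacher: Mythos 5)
Your ``main case'' is, in essence, the paper's entire proof: the paper handles the theorem with the single pair $(d(F), 1^{2|E(F)|})$ for a minimum-edge $F\in\F$, noting that $1^{2m}$ is forcibly $\F$-free because any member of $\F$ induced in the unique realization $mK_2$ would have to have exactly $m$ edges and hence be $mK_2$ itself, a minimum-edge graph of maximum degree $1$. Your reformulation (non-forcibly-$\F$-free sequences are downward-closed, $1^{2m}$ is the unique minimum of $\D_{2m}$, and an edge-count equality forces any induced member found in a realization from $\D_{2m}$ to be a spanning copy) is a careful and correct writeup of exactly that argument, and your prototype showing $\{P_3+K_1,3K_2\}$ is not dominance monotone via $(33211, 2^5)$ checks out.

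The genuine gap is the one you yourself flag: the case in which \emph{every} minimum-edge member of $\F$ has an isolated vertex is not proved. There you only describe a plan --- embed $G=G_0+kK_1$ in a sparse scaffold in some higher $\D_{2j}$ and find a forcibly $\F$-free $e$ below it --- and you concede that producing such an $e$ is the obstacle, since $1^{2j}$ may fail once $\F$ contains matchings $aK_2$ with $m<a\le j$; indeed any completion must branch on the structure of $\F$ in this way, so this is not a routine verification that can be waved through. As a standalone proof, the proposal is therefore incomplete. It is worth saying plainly that your scruple is well-founded relative to the paper's own conventions: since elements of $\D_{2m}$ have strictly positive terms and their realizations are assumed to have no isolated vertices, the pair $(d(F),1^{2|E(F)|})$ only lives in $\D_{2m}$ --- and $d(F)$ is only guaranteed to be non-forcibly-$\F$-free --- when the chosen minimum-edge $F$ has no isolated vertices, whereas the paper applies it to an arbitrary minimum-edge $F$ without comment. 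So you have correctly isolated a case the published one-line proof silently elides; but you have identified the hole rather than closed it, and closing it (by carrying out the higher-order construction, or by choosing the first $j$ for which $\D_{2j}$ contains a non-forcibly-$\F$-free sequence and analyzing the matching-type members this forces into $\F$) is exactly what a complete proof still requires.
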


\begin{proof} Let $\mathcal{F}$ be a dominance monotone set. If 
all graphs in $\F$ with the lowest number of edges have maximum degree greater than 1, then for such a graph $F$, the pair $(d(F), 1^{2|E(F)|})$ is a counterexample pair, since no element of $\F$ is induced in a realization of $1^{2|E(F)|}$, which is a contradiction.\end{proof}

\begin{cor}\label{cormaxmin} 
If $\F$ is a dominance monotone set, then $\F$ contains either a graph with a dominating vertex or a $(|V(F)|-2)$-regular graph $F$; in the latter case $F$ has an even number of vertices.
\end{cor}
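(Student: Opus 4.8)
The plan is to obtain this as an immediate consequence of Theorems~\ref{thmcomp} and~\ref{thmmaxdegree1}, organized as a dichotomy. If $\F$ already contains a graph with a dominating vertex, the first alternative of the statement holds and nothing remains to prove. So I would assume that no graph in $\F$ has a dominating vertex. This is exactly the hypothesis of Theorem~\ref{thmcomp}, which then tells us that the complementary family $\overline{\F}$ is itself dominance monotone.

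With $\overline{\F}$ in hand, the next step is to apply Theorem~\ref{thmmaxdegree1} to it, producing a graph $\overline{F}\in\overline{\F}$ that attains the minimum number of edges in $\overline{\F}$ and satisfies $\Delta(\overline{F})\le 1$. Let $F\in\F$ be its complement and set $n=|V(F)|$. The bound $\Delta(\overline{F})\le 1$ means every vertex of $F$ has degree at least $n-2$; and since $F$ has no dominating vertex, no vertex of $F$ has degree $n-1$, so every vertex of $F$ has degree at most $n-2$ as well. Hence $F$ is $(n-2)$-regular, which is the second alternative. Equivalently, the absence of a dominating vertex in $F$ says that $\overline{F}$ has no isolated vertex, so $\overline{F}$ is the perfect matching $\tfrac{n}{2}K_2$; this description makes the parity transparent, but even without it one sees that the degree sum $n(n-2)$ of an $(n-2)$-regular graph is even only when $n$ is even, giving the required even-order conclusion.

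I expect the argument to be essentially routine, since it merely chains the two preceding theorems. The only place demanding any care is the bookkeeping of complementation: one must track that minimizing edges in $\overline{\F}$ corresponds to an almost-complete graph $F$, and then use the no-dominating-vertex hypothesis to sharpen ``every vertex has degree at least $n-2$'' into exact $(n-2)$-regularity. There is no substantive obstacle beyond keeping these two complementary viewpoints consistent and correctly invoking the existence form of Theorem~\ref{thmmaxdegree1} (which guarantees \emph{some} minimum-edge graph in $\overline{\F}$ with maximum degree at most $1$).
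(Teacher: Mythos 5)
Your proposal is correct and follows essentially the same route as the paper: assume no graph in $\F$ has a dominating vertex, invoke Theorem~\ref{thmcomp} to get that $\overline{\F}$ is dominance monotone, apply Theorem~\ref{thmmaxdegree1} to obtain $\overline{F}\in\overline{\F}$ with $\Delta(\overline{F})\le 1$, and then sandwich the degrees of its complement $F$ between $|V(F)|-2$ and $|V(F)|-2$ to conclude $(|V(F)|-2)$-regularity, with parity from the even degree sum. Your perfect-matching observation for $\overline{F}$ is a small (correct) embellishment beyond the paper's handshaking argument, but the proof is otherwise the same.
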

\begin{proof} Let $\mathcal{F}$ be a dominance monotone set
in which no graph has a dominating vertex.
By Theorem~\ref{thmcomp},  $\overline{\mathcal{F}}$ is also a dominance monotone set. By Theorem~\ref{thmmaxdegree1}, there exists a graph in $ \overline{\mathcal{F}}$ with maximum degree at most 1. The complement of this graph is in $\mathcal{F}$; call it $F$. Thus, any vertex degree $d$ of $F$ satisfies $d \geq |V(F)|-1-1$. Since $F$ has no dominating vertex, we also have $d \leq \Delta(F) \leq |V(F)|-2$; thus $F$ is $(|V(F)|-2)$-regular. Since the sum of degrees in a graph is always even, $|V(F)|$ must be even.
\end{proof}

Recall from Section 1 that the threshold sequences are the maximal elements of a dominance order, and that their realizations are precisely the $\{2K_2,C_4,P_4\}$-free graphs.

\begin{prop}\label{propthreshold}
If, for every $F \in \{2K_2, C_4, P_4\}$, $\mathcal{F}$ contains an induced subgraph of $F$, then $\F$ is dominance monotone.
\end{prop}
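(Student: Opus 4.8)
The plan is to show that the hypothesis forces every $\F$-free graph to be a threshold graph, so that every forcibly $\F$-free sequence is in fact a threshold sequence; dominance monotonicity then follows immediately from the maximality of threshold sequences recalled in Section~\ref{sec: intro}. The argument requires no box-moving or estimation, only the transitivity of the induced-subgraph relation together with two facts already available to us: that threshold graphs are exactly the $\{2K_2,C_4,P_4\}$-free graphs, and that threshold sequences are exactly the maximal elements of each dominance order.

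First I would record the reduction to threshold-freeness. By hypothesis, for each $F \in \{2K_2, C_4, P_4\}$ there is a graph $H_F \in \F$ that is an induced subgraph of $F$. Because the induced-subgraph relation is transitive, if a graph $G$ contained $F$ as an induced subgraph, then it would also contain $H_F$ as an induced subgraph; taking the contrapositive, every $H_F$-free graph is $F$-free. Consequently any $\F$-free graph is simultaneously $2K_2$-free, $C_4$-free, and $P_4$-free, and is therefore a threshold graph by the forbidden-subgraph characterization of threshold graphs. In short, the class of $\F$-free graphs is a subclass of the threshold graphs.

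Next I would pass from graphs to sequences and invoke maximality. Suppose $e$ is forcibly $\F$-free and that $d \succeq e$. Since every realization of $e$ is $\F$-free, the previous step shows every realization of $e$ is a threshold graph; in particular $e$ is potentially threshold-graphic, so $e$ is a threshold sequence. As threshold sequences are precisely the maximal elements of their dominance order, the relation $d \succeq e$ with $e$ maximal forces $d = e$. Hence $d$ is forcibly $\F$-free, and since $d$ and $e$ were arbitrary, $\F$ is dominance monotone. (If $\F$ happens to have no forcibly $\F$-free sequences at all, the conclusion holds vacuously, so this case needs no separate treatment.)

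The only genuinely delicate point is the direction of the transitivity reduction in the middle step: one must use that freeness from the \emph{smaller} graph $H_F$ implies freeness from the \emph{larger} graph $F$, so that $\F$-freeness is at least as strong a restriction as threshold-freeness rather than weaker. Once this is correctly oriented, the remainder is a direct appeal to the fact that a threshold sequence admits no degree sequence strictly above it in the dominance order.
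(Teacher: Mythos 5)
Your proof is correct and takes essentially the same approach as the paper's: by transitivity of the induced-subgraph relation, every $\F$-free graph is $\{2K_2,C_4,P_4\}$-free and hence threshold, so every forcibly $\F$-free sequence $e$ is a threshold sequence, and the maximality of threshold sequences in the dominance order means $d \succeq e$ forces $d=e$. The paper compresses this into two sentences (phrased as the nonexistence of a counterexample pair), but your expanded argument, including the correctly oriented transitivity step, is the same proof.
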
 
\begin{proof} Assume that each of $2K_2,C_4,P_4$ has an induced subgraph belonging to $\F$. Every forcibly $\F$-free sequence is then a threshold sequence and is not majorized by any other degree sequence. Thus no counterexample pair exists for $\F$, and $\F$ is dominance monotone.\end{proof}

We can now characterize the dominance monotone sets with size $1$.

\begin{thm}\label{thmsize1} The dominance monotone sets of cardinality $1$ are $\{K_1\}, \{2K_1\},$ and $\{K_2\}.$
\end{thm}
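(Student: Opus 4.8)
The plan is to prove the two inclusions separately, leaning on the results already established. For the easy direction I would first observe that each of $K_1$, $2K_1$, and $K_2$ occurs as an induced subgraph in every one of $2K_2$, $C_4$, and $P_4$ (a single vertex, a nonadjacent pair, and an adjacent pair are each readily located in all three graphs). Proposition~\ref{propthreshold} then applies verbatim and shows that $\{K_1\}$, $\{2K_1\}$, and $\{K_2\}$ are dominance monotone.

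For the converse, suppose $\{F\}$ is dominance monotone. Theorem~\ref{thmmaxdegree1} forces $\Delta(F)\le 1$, so $F \cong aK_1 + bK_2$ for some integers $a,b\ge 0$; write $N = a+2b$ for its number of vertices, and note $2b\le N$. If $N\le 2$ then $F$ is one of $K_1$, $2K_1$, $K_2$, as claimed. Hence the entire content of the converse is to rule out $N\ge 3$, which I would do by exhibiting, for every such $F$, a counterexample pair $(d,e)$ and thereby contradicting dominance monotonicity.

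The construction I have in mind is uniform in $F$. Set $k=\binom{N}{2}-b$ and let $e=(2k-2)^{2k}$. Since a $(2k-2)$-regular graph on $2k$ vertices is precisely the complement of a $1$-regular graph on $2k$ vertices, i.e.\ of a perfect matching, and a perfect matching is unique up to isomorphism, the sequence $e$ has the single realization $\overline{kK_2}$. I would then argue that $e$ is forcibly $F$-free via complementation: $F$ is induced in $\overline{kK_2}$ exactly when $\overline{F}$ is induced in $kK_2$, but $\overline{F}=K_a\vee\overline{bK_2}$ is a connected graph on $N\ge 3$ vertices, while every connected induced subgraph of the matching $kK_2$ has at most two vertices. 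For the top of the pair, let $d$ be the degree sequence of $K_{2k-N}\vee F$, namely $d=\bigl((2k-1)^{2k-N},\,(2k-N+1)^{2b},\,(2k-N)^{a}\bigr)$; this graph plainly induces $F$ on the copy of $F$, so $d$ is not forcibly $F$-free. The choice $k=\binom{N}{2}-b$ is exactly what equalizes the edge counts of $K_{2k-N}\vee F$ and $\overline{kK_2}$, placing $d$ and $e$ in the same dominance order, and a finite prefix-sum check confirms $d\succeq e$: splitting the prefixes at the three blocks of $d$, each block-difference against the constant sequence $e$ is nonnegative and the total returns to zero at the last coordinate.

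The main obstacle is the forcibly-$F$-free claim for $e$, that is, establishing both that $e$ has a unique realization and that this realization avoids $F$; the complementation reformulation together with the connectivity of $\overline{F}$ is what makes this clean and, importantly, independent of $k$. The majorization $d\succeq e$ is routine but must be handled carefully so that the prefix-sum differences never dip below zero across block boundaries. I would also record that the hypotheses $N\ge 3$ and $2b\le N$ are precisely what force $k\ge 2$ and $2k-N\ge 1$, so that $e$ and $d$ have positive entries and $d\neq e$, making $(d,e)$ a genuine counterexample pair.
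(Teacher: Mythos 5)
Your proof is correct, but it takes a genuinely different route from the paper's. After the shared easy direction (Proposition~\ref{propthreshold}) and the shared reduction to $\Delta(F)\le 1$ via Theorem~\ref{thmmaxdegree1}, the paper finishes in two lines by invoking Corollary~\ref{cormaxmin} (which rests on the complementation result, Theorem~\ref{thmcomp}): $F$ either has a dominating vertex, forcing $F\in\{K_1,K_2\}$, or is $(|V(F)|-2)$-regular with an even number of vertices, forcing $F=2K_1$. You instead rule out every $F=aK_1+bK_2$ with $N=a+2b\ge 3$ by a uniform explicit counterexample pair: $e=(2k-2)^{2k}$ with $k=\binom{N}{2}-b$, whose unique realization is $\overline{kK_2}$, against $d$ the degree sequence of $K_{2k-N}\vee F$. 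I checked the details and they hold: uniqueness of the realization of $e$ (its complement is $1$-regular, hence a perfect matching); the forcibly-$F$-free claim (since $\overline{F}=K_a\vee\overline{bK_2}$ is connected on $N\ge 3$ vertices, it cannot embed in $kK_2$, whose connected induced subgraphs have at most two vertices); the edge-count match (your choice of $k$ is exactly what gives $\sum d_i=\sum e_i=2k(2k-2)$); positivity of all terms ($k\ge 2$ and $2k-N\ge 1$ follow from $N\ge 3$ and $2b\le N$); and the majorization, where the prefix surplus of $d$ over $e$ grows by $1$ per step through the first block up to $2k-N$, then shrinks by $N-3\ge 0$ per step in the second block and by $N-2\ge 1$ in the third, ending at exactly $0$ and hence never dipping below zero. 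One phrasing slip: your claim that ``each block-difference against the constant sequence $e$ is nonnegative'' is not literally true, since the per-term differences in the last two blocks are nonpositive; what is true, and what your argument actually needs, is that the \emph{running} surplus is nondecreasing on the first block and nonincreasing thereafter with terminal value $0$ --- this is where $N\ge 3$ is used. The trade-off between the two approaches: the paper's is shorter but leans on the complementation machinery of Theorem~\ref{thmcomp} and Corollary~\ref{cormaxmin}; yours is self-contained beyond Theorem~\ref{thmmaxdegree1} and constructive, producing explicit counterexample pairs for every singleton $\{aK_1+bK_2\}$ of order at least $3$ (for $F=K_2+K_1$ it specializes to the pair $(3221,2222)$ that the paper itself uses in later sections).
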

\begin{proof} By Proposition~\ref{propthreshold}, $\{K_1\}, \{K_2\}, \{2K_1\}$ are all dominance monotone sets. Let $\mathcal{F}=\{F\}$ be a dominance monotone set. By Theorem~\ref{thmmaxdegree1}, $\Delta(F) \leq 1$. If $F$ has a dominating vertex then $F$ equals $K_1$ or $K_2$; otherwise, by Corollary~\ref{cormaxmin}, $F$ is $(|V(F)|-2)$-regular. This implies that $|V(F)|-2 \leq 1$, and since $F$ has an even number of vertices, $F$ must be $2K_1$.
\end{proof}

\section{Dominance monotone pairs}
Because graphs with maximum degree at most 1 are necessary elements in dominance monotone sets, by Theorem~\ref{thmmaxdegree1}, we begin this section by establishing a result related to them.

\begin{lem} \label{lem: first construction}
Let $a,b \geq 0$ with $b \geq 3$ if $a=0$ and $b \geq 1$ if $a=1$. If $\F$ is a dominance monotone set containing $aK_2+bK_1$, then $\F$ contains an induced subgraph of a disjoint union of cycles having at most $3a+2b-1$ vertices.
\end{lem}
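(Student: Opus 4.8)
The plan is to contradict dominance monotonicity by producing, for a suitably chosen graph $G$ that contains $F=aK_2+bK_1$ as an induced subgraph, a counterexample-style pair $(d_G,\,2^N)$. Since $F\in\F$, any graph inducing $F$ fails to be $\F$-free, so its degree sequence $d_G$ is not forcibly $\F$-free. If I can arrange $d_G\succeq 2^N$ with $N$ equal to the number of edges of $G$ and $N\le 3a+2b-1$, then dominance monotonicity forbids $(d_G,2^N)$ from being a counterexample pair; as $d_G$ is not forcibly $\F$-free, the only escape is that $2^N$ is not forcibly $\F$-free either. Because every realization of $2^N$ is a disjoint union of cycles on $N$ vertices, this means some disjoint union of cycles on at most $3a+2b-1$ vertices induces an element of $\F$, which is exactly the conclusion.

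The key reduction I would isolate first is a majorization criterion: if $d=(d_1,\dots,d_n)$ has all terms positive and $\sum_i d_i=2N$, then $d\succeq 2^N$ whenever $n\le N$. This is the only direction I need, and I would prove it by examining partial sums. Were $\sum_{i=1}^k d_i\le 2k-1$ for some $k\le n$, then monotonicity forces $d_k=1$ (otherwise $d_1,\dots,d_k$ all exceed $1$ and the sum is at least $2k$), hence $d_k=\cdots=d_n=1$, giving $2N=\sum_i d_i\le(2k-1)+(n-k)=n+k-1$; since $N\ge n$ this yields $n\le k-1$, contradicting $k\le n$. Because both sequences must share the common sum $2|E(G)|$, the value $N$ is automatically forced to equal $|E(G)|$, so the criterion says precisely that $d_G\succeq 2^{|E(G)|}$ as soon as $|V(G)|\le|E(G)|$, and then the relevant realizations of $2^{|E(G)|}$ are disjoint unions of cycles on $|E(G)|$ vertices.

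It remains to build, in each case, a graph $G$ that induces $F$, has all degrees positive, and satisfies $|V(G)|\le|E(G)|\le 3a+2b-1$; crucially, no edge may be added among the $2a+b$ vertices of $F$, so every new edge must involve an auxiliary vertex. For $a\ge 1$ I would introduce a single hub vertex $w$ and join it to all $b$ isolated vertices of $F$ together with $a+1$ of the $2a$ matched vertices (possible since $a+1\le 2a$); this produces $|V(G)|=|E(G)|=2a+b+1$, which is at most $3a+2b-1$ exactly because $a+b\ge 2$ (automatic for $a\ge 2$, and guaranteed by $b\ge 1$ when $a=1$). For $a=0$ the hub alone leaves too few edges, so I would instead take the alternating path $P_{2b-1}$, whose $b$ odd-indexed vertices form the induced $bK_1$, and add one edge between two of its internal even-indexed vertices to reach $|V(G)|=|E(G)|=2b-1=3a+2b-1$. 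In both constructions $F$ is induced because every added edge touches an auxiliary vertex, and all degrees are positive.

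The main obstacle is the tightness of the edge bound: I must simultaneously force at least as many edges as vertices (to obtain the majorization) while spending no more than $3a+2b-1$ edges, and these two constraints very nearly collide. The case $a=0$ is the delicate one, where equality $|E(G)|=|V(G)|=2b-1$ is forced; here the hypothesis $b\ge 3$ is precisely what makes the path-plus-edge construction possible (two internal vertices exist to receive the extra edge) and what keeps $2^{2b-1}$ graphic. The excluded small cases, such as $(a,b)=(0,2)$ — where $\F=\{2K_1\}$ is in fact dominance monotone yet no cycle on at most $3$ vertices induces $2K_1$ — genuinely fail, confirming that the stated hypotheses are sharp. Once $G$ is in hand, the conclusion follows immediately from the majorization criterion together with dominance monotonicity as described above.
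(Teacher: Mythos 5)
Your proposal is correct, but it takes a genuinely different route from the paper. The paper uses one uniform pair for all cases: $d=3^1 2^{3a+2b-3}1^1$ and $e=2^{3a+2b-1}$, where $d\succeq e$ is immediate (move one box in the Ferrers diagram), and exhibits a realization of $d$ inducing $aK_2+bK_1$ by adding a single chord ($v_1v_{2b-2}$ if $a=0$, $v_1v_3$ if $a\geq 1$) to a path. You instead build a case-dependent graph $G$ inducing $F$ (a hub vertex joined to the $b$ isolated vertices and $a+1$ matched vertices when $a\geq 1$; a path plus a chord between $v_2$ and $v_4$ when $a=0$) and then deduce $d_G\succeq 2^{|E(G)|}$ from a clean general criterion: any positive degree sequence with $n$ terms summing to $2N$ majorizes $2^N$ once $n\leq N$. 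Your partial-sum proof of that criterion is sound, your edge counts check out ($|V(G)|=|E(G)|=2a+b+1\leq 3a+2b-1$ exactly when $a+b\geq 2$, which the hypotheses guarantee; $|V(G)|=|E(G)|=2b-1$ when $a=0$, where $b\geq 3$ supplies the two nonadjacent even-indexed internal vertices), $F$ stays induced because every new edge meets an auxiliary vertex, and $2^N$ is graphic since $N\geq 3$ throughout. Your approach even buys a sharper bound when $a\geq 1$ (cycles on at most $2a+b+1$ vertices rather than $3a+2b-1$), and the majorization criterion is a reusable tool; your remark that $(a,b)=(0,2)$ genuinely fails confirms the hypotheses are tight. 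One comparative caveat: the paper's proof does more than the statement asserts --- it shows by a component count that $2^{3a+2b-1}$ is forcibly $\{aK_2+bK_1\}$-free (deleting $a+b-1$ vertices from a disjoint union of cycles leaves at most $a+b-1$ components, while $aK_2+bK_1$ has $a+b$), and this extra fact, that the element of $\F$ found is \emph{not} $aK_2+bK_1$ itself, is explicitly invoked later in the proofs of Theorems~4.2 and~5.1 (``we saw there that such a graph cannot contain $aK_2+bK_1$ as an induced subgraph''). Your argument establishes only the stated conclusion, so it is a complete proof of the lemma as written, but if it were substituted into the paper you would need to append the analogous (and equally easy, for your values of $N$) component-counting observation to support those downstream uses.
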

\begin{proof}
Assume that $a,b \geq 0$ with $b \geq 3$ if $a=0$ and $b \geq 1$ if $a=1$. Assume also that $\F$ is a dominance monotone set containing $aK_2+bK_1$. Consider the degree sequences $d=3^1 2^{3a+2b-3} 1^1$ and $e=2^{3a+2b-1}$ and note that $d$ majorizes $e$. We claim that the degree sequence $d$ is not forcibly $\F$-free. If $a=0$, one realization is the graph obtained by adding the edge $v_1v_{2b-2}$ in a path $v_1v_2\cdots v_{2b-1}$; deleting $v_{2i}$ for all $1 \leq i \leq b-1$ leaves $bK_1$ as an induced subgraph. If $a \geq 1$, one realization is the graph obtained by adding the edge $v_1v_3$ to the path $v_1v_2\cdots v_{3a+2b-1}$; deleting $v_{3i}$ for all $1 \leq i \leq a$ and $v_{3a+2j}$ for $1 \leq j \leq b-1$ (when these vertices exist) leaves $aK_2+bK_1$ as an induced subgraph. 

Every realization of the degree sequence $e$ is a disjoint union of cycles.  Note that if $aK_2+bK_1$ were induced in a disjoint union of cycles on $3a+2b-1$ vertices, we could arrive at such a subgraph by deleting $a+b-1$ vertices; howevever, deleting $a+b-1$ vertices from a disjoint union of cycles leaves an induced subgraph with at most $a+b-1$ components. 

Hence $e$ is forcibly $aK_2+bK_1$-free. Since $d \succeq e$ and $\F$ is a dominance monotone set, some element of $\F$ must be an induced subgraph of some disjoint union of cycles having at most $3a+2b-1$ vertices.
\end{proof}

We now characterize the dominance monotone sets of cardinality 2, as follows.

\begin{thm} \label{thm: pairs}
A set $\F$ of two graphs is dominance monotone if and only if one of the following is true:
\begin{enumerate}
    \item[\textup{(i)}] $\F$ contains one of $K_1$, $2K_1$, or $K_2$;
    \item[\textup{(ii)}] $\F$ is one of $\{K_2+K_1, P_3\}$, $\{K_2+K_1,C_4\}$, or $\{2K_2,P_3\}$.\end{enumerate}
\end{thm}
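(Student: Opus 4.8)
The plan is to prove both directions, leaning on Proposition~\ref{propthreshold} for sufficiency and on the structural results (Theorems~\ref{thmcomp} and~\ref{thmmaxdegree1}, Corollary~\ref{cormaxmin}, and Lemma~\ref{lem: first construction}) together with a short list of explicit counterexample pairs for necessity. For the \emph{if} direction I would simply verify that each listed set meets the hypothesis of Proposition~\ref{propthreshold}, i.e. that each of $2K_2$, $C_4$, $P_4$ has an induced subgraph lying in $\F$. Every one of $K_1$, $2K_1$, $K_2$ is an induced subgraph of all three of $2K_2,C_4,P_4$, so any $\F$ of type (i) qualifies at once. For type (ii) the checks are routine: $K_2+K_1$ is induced in $2K_2$ and in $P_4$, $P_3$ is induced in $C_4$ and in $P_4$, and $C_4$ and $2K_2$ are each induced in themselves; these cover the three required coverings.

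For the \emph{only if} direction, suppose $\F=\{F_1,F_2\}$ is dominance monotone but not of type (i). By Theorem~\ref{thmmaxdegree1} the member of fewest edges, call it $F_1$, has maximum degree at most $1$, so $F_1=aK_2+bK_1$; excluding type (i) means $F_1\notin\{K_1,2K_1,K_2\}$, which is exactly the range where Lemma~\ref{lem: first construction} applies. Since the sequence $e=2^{3a+2b-1}$ in that lemma is forcibly $F_1$-free, the member of $\F$ embedding into a disjoint union of cycles must be $F_2$; hence $F_2$ is an induced subgraph of a disjoint union of cycles, and so is itself a disjoint union of paths and cycles. I would then split on whether $\F$ contains a dominating vertex, noting first that a non-type-(i) graph $aK_2+bK_1$ has none, so any dominating vertex lives in $F_2$.

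In the dominating-vertex case, a disjoint union of paths and cycles with a dominating vertex is connected and equal to $P_3$ or $K_3$ (the options $K_1,K_2$ being type (i)). When $F_2=K_3$, I would eliminate every admissible $F_1$ using $(32221,2^5)$, valid whenever $C_5$ is forcibly $F_1$-free (that is, for all $F_1$ except $K_2+K_1$ and the type-(i) graphs), together with $(3221,2222)$, which kills the leftover $\{K_2+K_1,K_3\}$ because $C_4$ is forcibly $\{K_2+K_1,K_3\}$-free while the paw $3221$ has a triangle. When $F_2=P_3$, the single pair $(211,1^4)$ does nearly all the work: $1^4$ realizes only as $2K_2$, which is forcibly $\{F_1,P_3\}$-free precisely when $2K_2$ does not induce $F_1$, i.e. for every $F_1$ outside $\{K_1,2K_1,K_2,K_2+K_1,2K_2\}$, whereas $211$ realizes as $P_3$; this forces $F_1\in\{K_2+K_1,2K_2\}$ and yields $\{K_2+K_1,P_3\}$ and $\{2K_2,P_3\}$. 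In the no-dominating-vertex case, Corollary~\ref{cormaxmin} requires $\F$ to contain an $(m-2)$-regular graph on an even number $m$ of vertices; since $F_1=aK_2+bK_1$ is such only as the type-(i) graph $2K_1$, the regular graph is $F_2$, and the only non-type-(i), $(m-2)$-regular disjoint union of paths and cycles is $C_4$, so $F_2=C_4$. Then Theorem~\ref{thmcomp} gives that $\overline{\F}=\{\overline{F_1},2K_2\}$ is dominance monotone, and applying Lemma~\ref{lem: first construction} to the member $2K_2$ shows $\overline{F_1}$ embeds in a disjoint union of cycles on at most $5$ vertices, whence $\Delta(\overline{F_1})\le 2$. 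Using $\Delta(\overline{F_1})=n-1-\delta(F_1)$ confines $F_1$ to $\{3K_1,K_2+K_1,2K_2\}$, and $(32221,2^5)$ eliminates both $\{3K_1,C_4\}$ and $\{2K_2,C_4\}$ (the latter being the example of Section~\ref{sec: intro}), leaving $\{K_2+K_1,C_4\}$.

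The main obstacle is the necessity direction, specifically pruning the infinite family $F_1=aK_2+bK_1$ down to the finitely many surviving graphs. The two economical moves that make this tractable are the observation that the one pair $(211,1^4)$ simultaneously rules out every $F_1$ not induced in $2K_2$, and the use of Theorem~\ref{thmcomp} in the dominating-vertex-free case to pass to $\overline{\F}$ so that Lemma~\ref{lem: first construction} can bound $F_1$ from the complement side. The remaining care is in verifying the handful of explicit counterexample pairs, namely that each smaller sequence is genuinely forcibly $\F$-free (which rests on identifying the unique realizations of $1^4,2^4,2^5$ and checking their induced subgraphs) and that each larger sequence admits a realization inducing a member of $\F$; these verifications are routine but are where the argument must be pinned down precisely.
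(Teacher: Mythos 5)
Your proposal is correct, and its skeleton matches the paper's: sufficiency via Proposition~\ref{propthreshold}, and necessity by making the fewest-edge member $F_1=aK_2+bK_1$ (Theorem~\ref{thmmaxdegree1}), forcing the partner into a disjoint union of paths and cycles via Lemma~\ref{lem: first construction}, and finishing with explicit counterexample pairs. The organization differs in two ways worth noting. First, the paper disposes of any pair containing $P_3$ or $K_2+K_1$ up front (via $(211,1111)$ and $(3221,2222)$), then uses Corollary~\ref{cormaxmin} to get $F_2\in\{P_3,K_3,C_4\}$ and kills $K_3$ and $C_4$ with the single pairs $(222,2211)$ and $(32221,2^5)$ --- the latter working uniformly precisely because the $K_2+K_1$ case is already settled (the only induced $aK_2+bK_1$ on at least $3$ vertices in $C_5$ is $K_2+K_1$, since every $4$-vertex induced subgraph of $C_5$ is $P_4$; this is also the fact your $K_3$ case leans on, and your parenthetical claim there is correct). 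You instead organize by the dominating-vertex dichotomy of Corollary~\ref{cormaxmin} and handle $K_3$ with $(32221,2^5)$ together with $(3221,2222)$, which does the same job as the paper's $(222,2211)$. Second, in the $C_4$ branch you make a genuinely different move: you invoke Theorem~\ref{thmcomp} to pass to $\overline{\F}=\{\overline{F_1},2K_2\}$ and apply Lemma~\ref{lem: first construction} on the complement side to confine $F_1$ to $\{3K_1,K_2+K_1,2K_2\}$, then prune with $(32221,2^5)$; the paper's proof of this theorem never uses Theorem~\ref{thmcomp}, killing the whole $C_4$ case with that one pair after the $K_2+K_1$ reduction. Your complementation detour costs a little extra work (three candidates instead of zero) but buys independence from the up-front reduction, and it anticipates the technique the paper does use for triples. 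One cosmetic omission: the paper also records the degenerate reduction (if one member of $\F$ is induced in the other, Theorem~\ref{thmsize1} applies), but your enumeration is exhaustive without it, so nothing is lost.
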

\begin{proof}
Sufficiency of the conditions (i) and (ii) follows from Proposition~\ref{propthreshold}. We now prove their necessity.

To begin, we show that the only dominance monotone pairs containing $P_3$ or $K_2+K_1$ are the ones indicated, as follows: If $\F=\{P_3,B\}$ is dominance monotone, then since $(211,1111)$ should not be a counterexample pair, $B$ must be induced in $2K_2$; every such graph $B$ yields one of the pairs from Theorem~\ref{thm: pairs}. If instead the dominance monotone set is $\{K_2+K_1,B\}$, then for $(3221,2222)$ to not be a counterexample pair, $B$ must be induced in $C_4$; every possibility for $B$ yields a set from the theorem statement.

Suppose now that $\F=\{A, B\}$ is a dominance monotone set in which $A$ and $B$ each have at least 3 vertices. Further assume that neither $A$ nor $B$ is an induced subgraph of the other; otherwise, if $A$ is induced in $B$, the $\F$-free graphs are precisely the $A$-free graphs, and Theorem~\ref{thmsize1} implies that $\F$ is dominance monotone if and only if the condition (i) holds.

By Theorem~\ref{thmmaxdegree1}, we may assume without loss of generality that $\Delta(A)\leq 1$. Hence $A$ has the form $aK_2+bK_1$ for some nonnegative $a$ and $b$. Since $A$ has at least three vertices, if $a=0$ then $b\geq 3$, and if $a=1$ then $b\geq 1$.

Recall from Corollary~\ref{cormaxmin} that some element of $\F$ either has a dominating vertex or is regular with degree its order minus 2. This element cannot be $A$; otherwise, as in the proof of Theorem~\ref{thmsize1}, $A$ would have two or fewer vertices, contrary to our assumption. Hence $B$ is the element of $\F$ with this property. Lemma~\ref{lem: first construction} implies that $B$ must also be induced in a disjoint union of cycles; thus $\Delta(B) \leq 2$. These several requirements on $B$ imply that it is one of $P_3$, $K_3$, or $C_4$. The case $B=P_3$ was handled previously. If $B$ is $K_3$ or $C_4$, then $(222, 2211)$ or $(32221, 2^5)$, respectively, is a counterexample pair. 
\end{proof}

\section{Dominance monotone triples}

In this section we characterize the dominance monotone sets of cardinality 3. In the following, the \emph{diamond} is the graph $K_4-e$ for an edge $e$.

\begin{thm} \label{thm: triples}
A set $\F$ of three graphs is dominance monotone if and only if one of the following is true:
\begin{enumerate}
    \item[\textup{(i)}] $\F$ contains a dominance monotone singleton or pair;
    \item[\textup{(ii)}] $\F$ is one of $\{2K_2, P_4, \rm{diamond}\}$, $\{K_2+2K_1, P_4, C_4\}$, $\{2K_2,P_4,C_4\}$, $\{2K_2,C_4,C_5\}.$ 
    \end{enumerate}
\end{thm}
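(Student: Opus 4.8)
The plan is to prove both directions. Sufficiency of (i) and (ii) is the easier half: condition (i) is immediate from the definitions (a superset of a dominance monotone set whose forbidden family already forces the threshold property inherits dominance monotonicity via Proposition~\ref{propthreshold} or the fact that forbidding more subgraphs shrinks the forcibly-free class in a compatible way); for condition (ii) I would verify that in each of the four triples, every member is an induced subgraph of one of $2K_2$, $C_4$, $P_4$. Indeed $2K_2$, $P_4$, the diamond, $K_2+2K_1$, $C_4$, and $C_5$ each contain an induced copy of at least one of $2K_2, C_4, P_4$ — wait, $C_5$ does not, so the four families split into two types, and I would handle $\{2K_2,C_4,C_5\}$ (the split graphs) separately, appealing to the known upward-closedness of split degree sequences cited in the introduction (Merris~\cite{Merris2002}, F\"oldes--Hammer~\cite{Ham1977}). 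For the other three triples I would apply Proposition~\ref{propthreshold}, checking that each triple hits an induced subgraph of each of $2K_2$, $C_4$, $P_4$.

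For necessity, **first I would** dispose of triples that contain a dominance monotone singleton or pair, which fall under (i) by definition. So assume $\F=\{A,B,C\}$ is dominance monotone, contains no dominance monotone singleton or pair, and (by minimality) no member is induced in another. By Theorem~\ref{thmmaxdegree1} the sparsest member, say $A$, has $\Delta(A)\le 1$, so $A=aK_2+bK_1$; since $A$ is not a dominance monotone singleton or part of a monotone pair, $A$ has at least four vertices and is one of $2K_2$, $K_2+2K_1$, or (if $A$ is edgeless) $4K_1$ or larger — I would first argue $A$ has few enough vertices. **The key constraint** comes from Lemma~\ref{lem: first construction}: since $A=aK_2+bK_1\in\F$, some element of $\F$ is an induced subgraph of a disjoint union of cycles on at most $3a+2b-1$ vertices, which bounds the structure of $B$ and $C$ and forces $\max\{\Delta(B),\Delta(C)\}\le 2$ for whichever of them realizes this. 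Combined with Corollary~\ref{cormaxmin} (some member has a dominating vertex or is $(|V|-2)$-regular), this severely limits the candidate graphs.

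**The main work, and the hardest part,** will be the finite but intricate case analysis over the possible $A$ and the resulting candidates for $B$ and $C$. I would organize it by the value of $A$: the main cases are $A=2K_2$ and $A=K_2+2K_1$ (together with ruling out $A$ having five or more vertices via Lemma~\ref{lem: first construction} forcing too small a cycle-union target). For each fixed $A$, the remaining members must be induced subgraphs of small graphs (disjoint unions of cycles of bounded order, graphs with a dominating vertex, or near-regular graphs) and must jointly force a threshold- or split-type obstruction; I would then exhibit an explicit counterexample pair in the appropriate $\D_{2m}$ for every triple not on the list. The candidate graphs for $B,C$ after the constraints are drawn from a short list such as $P_4$, $C_4$, $C_5$, $K_3$, the diamond, and $P_3$, and I expect each excluded combination to admit a small counterexample pair analogous to $(32221,2^5)$ and $(222,2211)$ used in Theorem~\ref{thm: pairs}.

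**The main obstacle** I anticipate is twofold: first, bookkeeping the full list of admissible $(A,B,C)$ after imposing Theorem~\ref{thmmaxdegree1}, Corollary~\ref{cormaxmin}, and Lemma~\ref{lem: first construction} simultaneously, making sure no monotone triple is overlooked; and second, constructing the right counterexample pairs for the rejected triples, since unlike the pair case the degree sequences and their realizations are larger and one must carefully exhibit a realization of $d$ inducing a forbidden graph while certifying that every realization of the majorized $e$ is $\F$-free (typically by a deletion/component-counting argument as in the proof of Lemma~\ref{lem: first construction}).
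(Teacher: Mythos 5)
Your sufficiency argument fails for two of the four triples in (ii), and these are precisely the cases that carry the paper's main new content. Proposition~\ref{propthreshold} requires that $\F$ contain an induced subgraph of \emph{each} of $2K_2$, $C_4$, and $P_4$, and this hypothesis is violated by $\{2K_2,P_4,{\rm diamond}\}$ (the induced subgraphs of $C_4$ are only $K_1$, $K_2$, $2K_1$, $P_3$, $C_4$, so no member of the triple is induced in $C_4$) and by $\{K_2+2K_1,P_4,C_4\}$ (no member is induced in $2K_2$). Concretely, these two classes contain non-threshold graphs --- $C_4$ is $\{2K_2,P_4,{\rm diamond}\}$-free and $2K_2$ is $\{K_2+2K_1,P_4,C_4\}$-free --- so the forcibly $\F$-free sequences are not all maximal in the dominance order (e.g.\ $(k+3)^k(k+1)^4$, realized by joining $k$ dominating vertices to $2K_2$, is forcibly $\{K_2+2K_1,P_4,C_4\}$-free but majorized by a threshold sequence), and the vacuous ``no sequence lies above'' argument breaks. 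The paper closes this with a genuinely separate argument (Proposition~\ref{Specialtriples}): a structural analysis showing that any $\{K_2+2K_1,P_4,C_4\}$-free graph containing an induced $2K_2$ partitions into two cliques plus dominating vertices, which pins the forcibly free sequences down to threshold sequences or $(k+3)^k(k+1)^4$, whose only majorizer $(k+3)^k(k+2)(k+1)^2k$ has a unique, $\F$-free realization; the diamond triple then follows by complementation via Theorem~\ref{thmcomp}. Your proposal contains no substitute for this step, and also leans in passing on the false principle that supersets of dominance monotone sets are automatically dominance monotone (upward closure of the larger forcibly-free class does not transfer to the smaller one; condition (i) works only because every monotone singleton and pair happens to satisfy the hypothesis of Proposition~\ref{propthreshold}).

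On necessity, your skeleton (Theorem~\ref{thmmaxdegree1} gives $A=aK_2+bK_1$, then Corollary~\ref{cormaxmin} and Lemma~\ref{lem: first construction} constrain $B$ and $C$) matches the paper's, but the claim that this reduces to a ``finite but intricate'' check over a short list such as $P_3,K_3,P_4,C_4,C_5$, diamond is not correct, and your proposed route to it fails: Lemma~\ref{lem: first construction} produces a cycle-union target on $3a+2b-1$ vertices, which \emph{grows} with $A$, so it does not rule out $A$ having five or more vertices as you suggest. In the paper, $a$ and $b$ remain unbounded, $B$ ranges over infinite families such as $K_1\vee(pK_2+qK_1)$ and $K_1\vee(P_3+pK_2+qK_1)$, and $C$ over families like $sK_2+tK_1$, $P_3+sK_2+tK_1$, $P_4+sK_2+tK_1$, $2P_3+sK_2+tK_1$; eliminating these requires parameterized counterexample pairs built from further constructions (the realizations of $(2a-1)^1 3^1 2^{2a-1}$ in Lemma~\ref{lem: third construction}, subdivided $K_1\vee(aK_2+(b-1)K_1)$ in Lemma~\ref{lem: second construction}, the subdivided $K_6$ of Lemma~\ref{lem: P4 K6 subdivided}), together with Theorem~\ref{thmcomp} to dispatch the case where no member of $\F$ has a dominating vertex (where $B$ is $(|V(B)|-2)$-regular with $\delta(B)\geq 4$). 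Small ad hoc pairs analogous to $(32221,2^5)$ cannot close these infinite families, so the necessity half of your proposal is a plan whose hardest steps are missing rather than a proof.
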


The proof will occupy the remainder of this section. We first show the sufficiency of the conditions (i) and (ii). Condition (i) and $\F=\{2K_2,P_4,C_4\}$ both imply that $\F$ is dominance monotone by Proposition~\ref{propthreshold}. That $\{2K_2,C_4,C_5\}$ is dominance monotone was shown by Merris~\cite{Merris2002}. 

\begin{prop}\label{Specialtriples} The triples $\{2K_2, P_4, \rm{diamond}\}$ and $\{K_2+2K_1, P_4, C_4\}$ are dominance monotone.
\end{prop}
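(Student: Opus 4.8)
The plan is to prove that each of the two triples $\F = \{2K_2, P_4, \mathrm{diamond}\}$ and $\F = \{K_2 + 2K_1, P_4, C_4\}$ is dominance monotone by verifying directly that no counterexample pair exists. Since a set is dominance monotone exactly when no counterexample pair exists, it suffices to show the following: whenever $d \succeq e$ and $e$ is forcibly $\F$-free, then $d$ is forcibly $\F$-free as well. By Muirhead's Lemma it is enough to treat the case where $d$ covers $e$, i.e. $F(e)$ is obtained from $F(d)$ by moving a single box down (equivalently, $F(d)$ is obtained from $F(e)$ by moving a single box up), since general majorization relations factor into such elementary steps. So the core task is a local analysis: assuming every realization of $e$ is $\F$-free, and $d$ is obtained from $e$ by one box-shift upward, show that every realization of $d$ is $\F$-free too.

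The key structural observation I would exploit is that the $\F$-free graphs in each case form a well-understood hereditary class with a clean description. For $\{2K_2, P_4, \mathrm{diamond}\}$, I would first identify what graphs are $\{2K_2, P_4, \mathrm{diamond}\}$-free; note that forbidding $2K_2$ and $P_4$ already places us near the threshold graphs (whose forbidden set is $\{2K_2, C_4, P_4\}$), so these graphs differ from threshold graphs only in that $C_4$ may be replaced by the diamond as an allowed/forbidden configuration. Similarly, $\{K_2 + 2K_1, P_4, C_4\}$-free graphs should admit a description close to the split or threshold families. The plan is to characterize the forcibly-$\F$-free degree sequences in each case — ideally showing they coincide with threshold sequences or a closely related upward-closed family — and then invoke the fact (from the introduction, via Hammer et al. and Merris) that threshold and split sequences are upward-closed in the dominance order. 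If the forcibly-$\F$-free sequences turn out to be exactly the threshold sequences, then Proposition~\ref{propthreshold}-style reasoning finishes immediately since threshold sequences are maximal.

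The main obstacle, I expect, is that the classes here are genuinely more permissive than threshold graphs, so the forcibly-$\F$-free sequences need not be maximal in the dominance order, and one cannot simply appeal to maximality. In that case the real work is showing the upward-closure directly: I would take an arbitrary degree sequence $d$ with a realization $G$ containing an induced copy of some $F \in \F$, and construct from it a sequence $e$ with $d \succeq e$ that still has a realization containing a forbidden subgraph, thereby establishing the contrapositive. Concretely, given an induced forbidden subgraph in a realization of $d$, I would apply a box-shift downward (a degree-reducing switch, in the spirit of the constructions in Lemma~\ref{lem: first construction} and the examples $(3221,2222)$ and $(32221,2^5)$ used earlier) while preserving the offending induced subgraph, to exhibit that the set of forcibly-$\F$-free sequences is closed upward. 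The delicate point is guaranteeing that the box-shift can be realized as an actual edge-rewiring that keeps the induced $F$ intact; this requires choosing the two affected vertices to lie outside the forbidden subgraph or to be manipulated so that their adjacencies within the subgraph are unchanged.

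Rather than a general downward construction, the cleanest route may be a finite case check: since both triples consist of small graphs, I would argue that the forcibly-$\F$-free sequences form an explicitly describable upward-closed family, and verify upward-closure by examining how an elementary box-move up can create a realization inducing $2K_2$, $P_4$, the diamond, $K_2 + 2K_1$, or $C_4$. For the diamond in particular, I would use the fact that the diamond is the complement of $K_2 + 2K_1$ and invoke Theorem~\ref{thmcomp} where applicable, reducing one triple to the other via complementation (noting that $P_4$ is self-complementary and $\overline{2K_2} = C_4$), which should let me handle the two triples in tandem and cut the case analysis roughly in half.
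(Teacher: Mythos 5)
Your opening move---complementation via Theorem~\ref{thmcomp}, noting $\overline{P_4}=P_4$, $\overline{2K_2}=C_4$, and $\overline{\mathrm{diamond}}=K_2+2K_1$---is exactly the paper's first step (with the caveat, which you should make explicit, that the reduction only runs in one direction: the diamond has a dominating vertex, so you must prove $\{K_2+2K_1,P_4,C_4\}$ dominance monotone and deduce the other triple, not vice versa). After that, however, your proposal is a plan with a genuine gap rather than a proof. Your main fallback---decompose $d\succeq e$ into elementary box-moves and then, given a realization of $d$ inducing some $F\in\F$, realize the downward box-shift as an edge rewiring that preserves the offending induced copy of $F$---cannot work as stated, and the reason is visible in the paper itself: if such a local preservation argument were available in general, it would show that \emph{every} family $\F$ is dominance monotone, contradicting the paper's explicit counterexample pair $(32221,2^5)$ for $\{2K_2,C_4\}$. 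The 5-cycle is the unique realization of $2^5$ and induces neither $2K_2$ nor $C_4$, even though $32221$ has a realization inducing $C_4$; no rewiring scheme can get around this. Whatever makes these particular triples work must come from structure special to them, and your proposal never identifies it. (Also, your phrasing ``construct from it a sequence $e$ with $d\succeq e$'' misstates the contrapositive: you must handle the \emph{given} $e$ below $d$, not exhibit some convenient one.)

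The missing idea is the paper's structural characterization of the forcibly $\F$-free sequences for $\F=\{K_2+2K_1,P_4,C_4\}$. Your instinct that these sequences should form an ``explicitly describable upward-closed family'' close to the threshold sequences is correct, but the work lies in proving the description: the paper shows that any $\F$-free graph containing an induced $2K_2$ (with edges $pq$, $rs$) partitions as two cliques plus dominating vertices, and then---crucially---uses a 2-switch on a realization of $e$ (delete $pq,rs$; add $pr,qs$) to force the outer clique $R$ to be exactly $\{r,s\}$ when $e$ is \emph{forcibly} $\F$-free. This pins $e$ down to the rigid one-parameter form $(k+3)^k(k+1)^4$, whose first $k$ terms are already maximal, so by Muirhead's Lemma the only sequence strictly majorizing it is the threshold sequence $(k+3)^k(k+2)(k+1)^2k$, whose unique realization ($k$ dominating vertices joined to $P_3+K_1$) is checked to be $\F$-free; the case where $e$ is forcibly $2K_2$-free is vacuous since then $e$ is threshold and maximal. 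Note that this route never needs your cover-by-cover reduction at all: rigidity of $e$ collapses everything above it to a single directly verifiable sequence. Without the two-cliques-plus-dominating-vertices lemma and the 2-switch argument forcing $|R|=2$, your proposal has no mechanism to rule out counterexample pairs, so as it stands it does not constitute a proof.
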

\begin{proof}
Since each of the two sets contains complements of the other set's graphs, by Theorem~\ref{thmcomp} it suffices to show that $\F=\{K_2+2K_1, P_4, C_4\}$ is a dominance monotone set.

Assume that $d \succeq e$ and $e$ is forcibly $\F$-free. If $e$ is also forcibly $2K_2$-free, then $e$ is a threshold sequence, and it is vacuously true that $d$ is forcibly $\F$-free. Suppose instead that $e$ is not forcibly $2K_2$-free. 

We claim that any graph that is $\mathcal{F}$-free and contains $2K_2$ as an induced subgraph may have its vertices partitioned into two cliques and a set containing only dominating vertices. Indeed, consider such a graph $G$, and suppose that the edges of some induced subgraph isomorphic to $2K_2$ are $pq$ and $rs$. 

Partition $V(G)$ as $\{p, q\} \cup N(\{p, q\}) \cup R,$ where $N(\{p, q\})$ denotes the set of vertices in $V(G)\setminus \{p,q\}$ that are adjacent to at least one of $p,q$, and $R$ consists of vertices adjacent to neither of $\{p, q\}$. Then $R$ must be a clique. To see that, suppose there exist non-adjacent vertices $x, y \in R.$ Then $\{p, q, x, y\}$ induces $K_2+2K_1.$ Now, if $N(\{p, q\})$ is empty, then the result holds. Therefore, we may assume that $N(\{p,q\})$ is not empty. We claim that each vertex of $N(\{p,q\})$ is adjacent to all vertices in $R$. Suppose that this is not the case. That is, there exists $w \in N(\{p,q\})$ that is not adjacent to all vertices in $R.$ If $w$ has two distinct non-neighbors $z, v$ in $R$, then either $\{z, v, w, p\}$ or $\{z,v,w,q\}$ induces $K_2+2K_1.$ Thus, any vertex $w$ in $N(\{p,q\})$ is adjacent to all vertices but possibly one vertex in $R.$ If $w$ is not adjacent to all vertices in $R$, then denote the non-neighbor of $w$ in $R$ by $t$. Since $w$ has at most one non-neighbor, $w$ must be adjacent to either $r$ or $s$; without loss of generality, suppose it is $r$. Hence $t \neq r$, and then either $\{t, r, w, p\}$ or $\{t,r,w,q\}$ induces a $P_4.$ This contradiction shows that each vertex in $N(\{p,q\})$ is adjacent to all vertices in $R$.

To finish our proof that $G$ may have its vertices partitioned into two cliques and a set containing dominating vertices, it suffices to show that $N(\{p,q\})$ is a clique and that each vertex in $N(\{p, q\})$ to both $p, q$. Suppose that $u,w$ in $N(\{p,q\})$ are not adjacent. If $u,w$ have a common neighbor in $\{p,q\}$, then that neighbor and a common neighbor from $R$, form an induced $C_4$ with $u,w$. Otherwise, $u$ and $w$ have distinct neighbors in $\{p,q\}$, and the subgraph induced by $\{p,q,u,w\}$ is $P_4$, a contradiction.

Lastly, we show that each vertex in $N(\{p,q\})$ is adjacent to both $p$ and $q$. If not, then without loss of generality, there is a vertex $w$ in $N(\{p,q\})$ that is adjacent to $p$ but not to $q$. The subgraph induced by the set $\{r,w,p,q\}$ is then $P_4$, which is a contradiction.

Now suppose that $G$ is a realization of the forcibly $\{K_2+2K_1,P_4,C_4\}$-free sequence $e$. We show that $R = \{r,s\}$. If $R$ contains an additional vertex $x$, then deleting the edges $pq$ and $rs$ and adding the edges $pr$ and $qs$ yields another realization of $e$ in which the vertices $q,r,s,x$ induce a copy of $P_4$, a contradiction, since $e$ was assumed to be forcibly $\{K_2+2K_1,P_4,C_4\}$-free.

Hence $e$ has the general form $e= (k+3)^k (k+1)^4$ where $k$ is a nonnegative integer. Note that the first $k$ terms of $e$ correspond to dominating vertices in $G$ and hence are maximal for the length of this degree sequence. Thus, if $d \succeq e$, then $d$ can only differ from $e$ in the last four terms. It follows from Muirhead's Lemma and inspection that $d$ is the threshold sequence $(k+3)^k (k+2)^1 (k+1)^2 k^1$, which has a unique realization obtained when $k$ dominating vertices are added to $P_3+K_1$. This graph is $\mathcal{F}$-free, so $\{K_2+2K_1,P_4,C_4\}$ is dominance monotone. 
\end{proof}

We now prove the necessity of Conditions (i) and (ii) in Theorem~\ref{thm: triples}. Suppose that $\mathcal{F}= \{ A, B, C\}$ is a dominance monotone set.

If $C$ contains $A$ or $B$ as an induced subgraph, then the $\F$-free graphs are precisely the $\{A,B\}$-free graphs, and $\{A,B\}$ is a dominance monotone pair (possibly containing a dominance monotone singleton), as in (i). Assume henceforth that none of $A,B,C$ is an induced subgraph of another; the order of each of $A, B, C$ is then at least 3.

By Theorem~\ref{thmmaxdegree1}, we assume without loss of generality that $A=aK_2+bK_1$ for some integers $a,b$.  By Corollary~\ref{cormaxmin}, $\F$ contains either a graph with a dominating vertex or a graph that is regular of degree 2 less than its order. As in the previous section we conclude that this graph is not $A$; without loss of generality we assume it is $B$.

By Lemma~\ref{lem: first construction}, $\F$ contains an induced subgraph of a disjoint union of cycles on at most $3a+2b-1$ vertices; we saw there that such a graph cannot contain $aK_2+bK_1$ as an induced subgraph. Hence either $B$ or $C$ is an induced subgraph of a disjoint union of cycles.

If $B$ is induced in a disjoint union of cycles, then $\Delta(B) \leq 2$. Because $|V(B)| \geq 3$ and $B$ has a dominating vertex or is $(|V(B)|-2)$-regular, $B$ must be $P_3$ or $K_3$ or $C_4$. We will handle these possibilities now, along with a few other cases that will be useful in the future.

\begin{lem}\label{lem: P3 K3 C4}
Every dominance monotone triple containing $P_3$ or $K_3$ or $K_2+K_1$ contains a dominance monotone singleton or pair. Every dominance monotone triple containing $C_4$ either contains a dominance monotone singleton or pair or is one of $\{K_2+2K_1,P_4,C_4\},\{2K_2,P_4,C_4\}, \{2K_2,C_4,C_5\}$. Every dominance monotone triple containing $2K_2$ and $P_4$ either contains a dominance monotone singleton or pair or is $\{2K_2,P_4,C_4\}$ or $\{2K_2, P_4, {\rm diamond}\}$.
\end{lem}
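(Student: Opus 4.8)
The engine of the whole argument is the contrapositive of dominance monotonicity: if $\F$ is dominance monotone and $(d,e)$ is a pair with $d\succeq e$ in which $d$ has a realization inducing some member of $\F$, then $e$ cannot be forcibly $\F$-free. I would repeatedly choose $e$ to be a degree sequence with a \emph{unique} realization $H$, and $d\succeq e$ to have a realization inducing a prescribed graph $G\in\F$; since $H$ will be chosen to avoid $G$, this forces a \emph{second} member of $\F$ to be an induced subgraph of $H$. Enumerating the (few) induced subgraphs of $H$ pins the remaining members down to a short list, and each entry either produces a dominance monotone singleton ($K_1$, $2K_1$, or $K_2$) or combines with the named graphs to give a dominance monotone pair from Theorem~\ref{thm: pairs}.

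For the first assertion I would use the pairs $(211,1^4)$, $(3221,2222)$, and $(222,2211)$, whose sequences $1^4$, $2222$, $2211$ are uniquely realized by $2K_2$, $C_4$, $P_4$, while $211$, $3221$, $222$ realize $P_3$, a paw (inducing $K_2+K_1$), and $K_3$. Thus a dominance monotone triple containing $P_3$ (resp.\ $K_2+K_1$, resp.\ $K_3$) has a second member induced in $2K_2$ (resp.\ $C_4$, resp.\ $P_4$). Since the induced subgraphs of these three graphs are all small, every branch terminates in a monotone singleton or in one of $\{K_2+K_1,P_3\}$, $\{K_2+K_1,C_4\}$, $\{2K_2,P_3\}$ — except the branches of the $K_3$ case reaching $\{K_3,P_4,C_4\}$ or $\{K_3,P_4,2K_2\}$, which I dispatch as not dominance monotone via $(32111,311111)$ (the sequence $311111$ is uniquely realized by the star forest $K_{1,3}+K_2$, while $32111$ has a realization inducing $P_4$) and via $(3221,2222)$.

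For the $C_4$ assertion, the pair $(32221,2^5)$, with $2^5$ uniquely realized by the $C_4$-free graph $C_5$, forces a second member of $\F$ into $C_5$; discarding the branches already handled leaves the families $\{C_4,P_4,C\}$ and $\{C_4,C_5,C\}$. In each, the two named graphs have maximum degree $2$, so Theorem~\ref{thmmaxdegree1} forces $C=aK_2+bK_1$; if $C$ has order at most $2$ it is a monotone singleton, and if $C$ is induced in $C_4$, $C_5$, or $P_4$ the triple reduces to an already-handled case. Because none of $C_4$, $P_4$, $aK_2+bK_1$ has a dominating vertex, Theorem~\ref{thmcomp} shows that $\{C_4,P_4,C\}$ is dominance monotone only if $\{2K_2,P_4,\overline{C}\}$ is, linking this family to the third assertion (here the target $\{K_2+2K_1,P_4,C_4\}$ corresponds to $\{2K_2,P_4,\mathrm{diamond}\}$ and the threshold set is self-paired); this lets me settle one family directly by the constructions below and transport the conclusion to the other.

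The main obstacle is eliminating the infinite family $C=aK_2+bK_1$, leaving only $2K_2$ and $K_2+2K_1$ (with $P_4$) and $2K_2$ (with $C_5$); note that Lemma~\ref{lem: first construction} only bounds $3a+2b$ from below, so the necessary \emph{caps} on $a$ and $b$ must come from a second family of counterexample pairs. The recipe is to take a degree sequence whose unique realization $H$ is a trivially perfect (equivalently, in the complement, split) graph avoiding all of $\F$, and then move a single box up its Ferrers diagram to obtain a realization inducing a genuine member of $\F$. Representative instances are $(43221,42222)$, where $42222$ is uniquely realized by $2K_2\vee K_1$ and the majorizing sequence has the realization $(P_3+K_1)\vee K_1$ inducing $3K_1$, and $(322221,222222)$, where $222222$ is realized only by $C_6$ and $2C_3$ (both avoiding the relevant $C$) while $322221$ realizes a $5$-cycle with a pendant vertex. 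The delicate points, and where I expect the real work, are (i) that the box-move must expose an \emph{actual} element of $\F$, not merely a proper subgraph of the forbidden $C$, and (ii) that \emph{every} realization of the lower sequence must be verified to avoid all of $\F$; I would organize this according to whether $b\ge 3$ (so $C\supseteq 3K_1$) or $a\ge 2$ with $(a,b)\ne(2,0)$ (so $C$ properly contains $2K_2$), producing a scalable construction in each regime. The residual finite cases, together with the sufficiency recorded in Proposition~\ref{Specialtriples} and Merris's theorem, then leave exactly the four triples of the statement.
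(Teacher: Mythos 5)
Your overall method---counterexample pairs $(d,e)$ with $e$ uniquely (or nearly uniquely) realized, forcing a second member of $\F$ into a small graph, then finite enumeration against Theorem~\ref{thm: pairs}---is exactly the paper's, and your first assertion is essentially sound: the pairs $(211,1111)$, $(3221,2222)$, $(222,2211)$, and $(32111,311111)$ all check out (though the paper routes the $K_3$ case through $(3221,2222)$, forcing the second member into $C_4$ rather than $P_4$, which avoids your residual $\{K_3,P_4,Y\}$ enumeration altogether). The genuine gap is in the third assertion, which your plan never actually proves. Your only mechanism for triples $\{2K_2,P_4,B\}$ is the complementation transport via Theorem~\ref{thmcomp}, but that theorem's hypothesis requires that \emph{no} member of the set have a dominating vertex, and the candidates for $B$ that matter most---the diamond (one of the two target outcomes!), the paw, and more generally the graphs $K_1 \vee (pK_2+qK_1)$ that Corollary~\ref{cormaxmin} puts in play---all have dominating vertices, so for them the transport yields nothing in either direction, and infinitely many such $B$ remain unclassified. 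The paper's missing device is the pair $(43322,33332)$: the sequence $33332$ is uniquely realized by an edge-subdivided $K_4$, which contains no induced $2K_2$ or $P_4$, while $43322$ is realized by $K_1 \vee P_4$; hence $B$ must be induced in the subdivided $K_4$, and Corollary~\ref{cormaxmin} then cuts the candidates to $C_4$, $P_3$, $K_1\vee(K_2+K_1)$, and the diamond, with the paw eliminated by $(3221,2222)$. Nothing in your proposal plays this role, and since your second assertion is to be settled by transporting \emph{from} the third, both assertions are left open.

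A second, self-inflicted difficulty: your reduction of the $C_4$ families to $\{C_4,P_4,aK_2+bK_1\}$ and $\{C_4,C_5,aK_2+bK_1\}$ via Theorem~\ref{thmmaxdegree1} bounds only $a$, leaving $b$ free, and you explicitly defer the ``real work'' of a scalable elimination in two regimes. The paper never faces this infinite family, because one further forcing pair caps the third graph at once: for $\{C_4,P_4,\cdot\}$ the pair $(2211,21111)$, whose lower sequence is uniquely realized by the $\{C_4,P_4\}$-free graph $P_3+K_2$, forces the third graph to be induced in $P_3+K_2$, leaving only $3K_1$, $2K_2$, $K_2+2K_1$ after discarding handled cases (with $3K_1$ killed by your own pair $(43221,42222)$); for $\{C_4,C_5,\cdot\}$ the pair $(2222,22211)$ forces the third graph into $P_5$ or $K_3+K_2$, again a finite list. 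So the obstacle you identify as the main one disappears under a sharper choice of forcing pair, while the obstacle you wave through---$B$ with a dominating vertex in $\{2K_2,P_4,B\}$---is where the paper's actual work, and the discovery of the diamond triple, lies.
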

\begin{proof}
Let $\F=\{A,B,C\}$ be an arbitrary dominance monotone set. By Theorem~\ref{thmmaxdegree1} we may assume that $A=aK_2+bK_1$ for nonnegative integers. 

If $B=P_3$, then since $(211, 1111)$ is not a counterexample pair, either $A$ or $C$ must be induced in $2K_2$. By Theorem~\ref{thm: pairs} this graph and $B$ then form a dominance monotone pair.

If $A=K_2+K_1$, then since $(3221,2222)$ is not a counterexample pair, either $B$ or $C$ must be induced in $C_4$. By Theorem~\ref{thm: pairs} this graph and $A$ then form a dominance monotone pair.

If $B=K_3$, then since $(3221, 2222)$ is not a counterexample pair, either $A$ or $C$ is an induced subgraph of $C_4$. By Theorem~\ref{thm: pairs} the set $\F$ will contain a dominance monotone singleton or pair 
unless $C=C_4$. With $C=C_4$, since $(32221, 2^5)$ is not a counterexample pair, $\F$ contains an induced subgraph of $C_5$, which must be $A$. Since $A$ has at least three vertices, we conclude that $A=K_2+K_1$; then $\F$ contains the dominance monotone pair $\{K_2+K_1,C_4\}$. 

If $B=C_4$, then since $(32221, 2^5)$ is not a counterexample pair, $\F$ contains an induced subgraph of $C_5$. If $A$ is this subgraph, then either $A$ has fewer than three vertices (in which case $\F$ contains a dominance monotone singleton, satisfying our claim), or $A = K_2+K_1$, which was discussed previously. Assume that $C$ is induced in $C_5$. The cases where $C$ is $P_3$ or $K_2+K_1$ or a graph with fewer than three vertices lead to $\F$ containing a dominance monotone singleton or pair, so we may assume that $C=P_4$ or $C= C_5$.  

If $B=C_4$ and $C=C_5$, then since $(2222,22211)$ is not a counterexample pair for $\F$, the graph $A$ must be induced in $K_2+K_3$ or $P_5$ and hence is induced in $2K_2$. If $A$ is $K_2+K_1$ or has fewer than three vertices, then $\F$ contains a dominance monotone singleton or pair. Otherwise, $A=2K_2$, and $\F=\{2K_2,C_4,C_5\}$. 

If $B=C_4$ and $C=P_4$, then since $(2211, 21111)$ is not a counterexample pair, $A$ is induced in $P_3+K_2$. If $A$ has three or fewer vertices or is $K_2+K_1$, then $\F$ contains a dominance monotone singleton or pair; otherwise, $A$ is one of $3K_1, 2K_2,K_2+2K_1$. If $A=3K_1,$ we have $(43221, 42222)$ as a counterexample pair, a contradiction. When $A=2K_2$ we have  $\mathcal{F}=\{2K_2,P_4,C_4\}$, and when $A=K_2+2K_1$, we have $\F=\{K_2+2K_1,P_4,C_4\}$.

If $A=2K_2$ and $C=P_4$, then consider the pair $(d,e)$, where $d=43322$ (the degree sequence of $K_1 \vee P_4$) and $e=33332$, which has a unique realization that is obtained by subdividing an edge of $K_4$. Observe that the realization of $e$ contains no induced $2K_2$ or $P_4$. Since $\mathcal{F}$ is dominance monotone, $B$ must be induced in the unique realization of $33332$. Since $B$ either is $(|V(B)|-2)$-regular or has a dominating vertex, we see that either $B$ is $C_4$ or $B$ is $P_3$ (which was discussed above) or $K_1 \vee (K_2 + K_1)$ or the diamond graph. The possibility $B=K_1 \vee (K_2 + K_1)$ is eliminated by the counterexample pair $(3221, 2222)$, so we conclude that $\F$ is either $\{2K_2,C_4,P_4\}$ or $\{2K_2,P_4,{\rm diamond}\}$.
\end{proof}

Assume henceforth that the dominance monotone triple $\F$ contains none of $P_3$, $K_2+K_1$, $K_3$, or $C_4$, and that it does not contain the pair $\{2K_2,P_4\}$ as a subset. Having determined the dominance monotone triples where $\Delta(B) \leq 2$, we will assume in the remainder of the proof that $\Delta(B) \geq 3$ and that $C$ is induced in a disjoint union of cycles on at most $3a+2b-1$ vertices.

To help further restrict our search for dominance monotone triples, we present some further requirements for the set $\F$.

\begin{lem} \label{lem: second construction}
If $\F$ is a dominance monotone set containing $aK_2+bK_1$ or $K_1 \vee (aK_2+bK_1)$ for $b \geq 1$ (and $b \geq 3$ if $a=0$), then $\F$ contains an induced subgraph of a graph obtained by subdividing one edge of $K_1 \vee (aK_2+(b-1)K_1)$; any such induced subgraph is $\{aK_2+bK_1\}$-free.
\end{lem}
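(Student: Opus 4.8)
The plan is to imitate the proof of Lemma~\ref{lem: first construction}: I will produce two degree sequences $d \succeq e$ such that $e$ is forcibly $\{aK_2+bK_1\}$-free while $d$ is not forcibly $\F$-free, so that the dominance monotonicity of $\F$ forces a member of $\F$ to be induced in some realization of $e$. Since subdividing an edge preserves the degrees of its two endpoints and adds a single new vertex of degree $2$, every subdivision of a single edge of $K_1 \vee (aK_2+(b-1)K_1)$ has the same degree sequence, namely $e = (2a+b-1)^1\,2^{2a+1}\,1^{b-1}$; I take this as $e$, and I take $d = (2a+b)^1\,2^{2a}\,1^b$, the degree sequence of $K_1 \vee (aK_2+bK_1)$. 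The hypotheses on $a,b$ guarantee $2a+b \ge 3$, so $e$ is a genuine nonincreasing positive sequence with $\Delta = 2a+b-1$.

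The two sequences are easily related. Both are partitions of $6a+2b$, and comparing partial sums shows that $d$ stays ahead of $e$ by exactly $1$ through position $2a+1$ and ties thereafter, so $d \succeq e$. The graph $K_1 \vee (aK_2+bK_1)$ realizes $d$ and contains both $aK_2+bK_1$ (on its non-apex vertices) and $K_1 \vee (aK_2+bK_1)$ (as the whole graph) as induced subgraphs; hence whichever of these two graphs lies in $\F$, the sequence $d$ is not forcibly $\F$-free. That $e$ is forcibly $\{aK_2+bK_1\}$-free is a short edge count: a realization $G$ of $e$ has $3a+b$ edges, and an induced copy of $aK_2+bK_1$, having $2a+b = n(G)-1$ vertices, would be $G-z$ for a single $z$; but $|E(G-z)| \ge (3a+b)-\Delta(G) = a+1 > a$, which is too many edges. (This is exactly where $2a+b \ge 3$ enters, explaining the exclusion of $a=0,\,b=2$.)

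The crux is to show that every realization $G$ of $e$ actually arises by subdividing one edge of $K_1 \vee (aK_2+(b-1)K_1)$; with this in hand, the member of $\F$ delivered by dominance monotonicity is induced in such a graph, and the final sentence of the lemma follows because induced subgraphs inherit $\{aK_2+bK_1\}$-freeness from the edge count above. I would analyze $G$ through a vertex $v$ of maximum degree $2a+b-1 = n(G)-2$, which has a unique non-neighbor $u^*$. In $H := G-v$, every vertex other than $u^*$ has its degree reduced to at most $1$, while $u^*$ retains degree $1$ or $2$; consequently $H$ is a disjoint union of isolated vertices, edges, and at most one $P_3$, the last necessarily centered at $u^*$. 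Counting the degree-$1$ and degree-$2$ vertices then determines $H$ exactly: if $u^*$ has degree $1$ then $H = (a+1)K_2 + (b-2)K_1$, and restoring $v$ (adjacent to all of $H$ except $u^*$) yields $a$ triangles, one pendant path of length $2$, and $b-2$ pendant edges at $v$ --- a subdivided pendant edge; if $u^*$ has degree $2$ then $H = P_3 + (a-1)K_2 + (b-1)K_1$, and restoring $v$ yields $a-1$ triangles, one $4$-cycle, and $b-1$ pendant edges at $v$ --- a subdivided triangle edge.

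Assembling these pieces completes the argument: dominance monotonicity gives an element of $\F$ induced in some realization of $e$; by the edge count it is not $aK_2+bK_1$, and by the structural classification it is an induced subgraph of a subdivision of $K_1 \vee (aK_2+(b-1)K_1)$, as claimed. The main obstacle is this classification of the realizations of $e$; the delicate points are that the case $a=0$ admits only the degree-$1$ outcome for $u^*$ (there being too few degree-$2$ vertices to form a $P_3$), while $a \ge 1$ permits both, together with the bookkeeping that forces the exact component multiset of $H$ in each case.
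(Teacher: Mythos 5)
Your proposal is correct and takes essentially the same approach as the paper: you use the identical pair $d=(2a+b)^1\,2^{2a}\,1^b \succeq e=(2a+b-1)^1\,2^{2a+1}\,1^{b-1}$ and the same structural classification of the realizations of $e$ (delete the maximum-degree vertex, analyze its unique non-neighbor of degree $1$ or $2$), recovering exactly the two one-edge subdivisions of $K_1 \vee (aK_2+(b-1)K_1)$ shown in the paper's Figure~2. Your only deviation is establishing that $e$ is forcibly $\{aK_2+bK_1\}$-free by an edge count ($|E(G-z)| \geq a+1 > a$) instead of the paper's inspection of the two realizations, which is a clean but minor variant.
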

\begin{proof}
Consider the degree sequences $d=(b+2a)^1 2^{2a} 1^b$ and $e = (b+2a-1)^1 2^{2a+1} 1^{b-1}$. Clearly $d$ majorizes $e$. Observe that the unique realization of $d$ is a graph isomorphic to $K_1 \vee (aK_2+bK_1)$. 

We show that $e$ has at most two realizations. In any realization $G$ of $e$, a vertex of maximum degree has one non-neighbor. If this non-neighbor has degree 1 (which can only happen if $b \geq 2$), then deleting a vertex of maximum degree yields a graph with degree sequence $1^{2a+2}0^{b-2}$, which has a unique realization in $(a+1)K_2+(b-2)K_1$. Thus, $G$ is the graph obtained from $K_1 \vee (aK_2+(b-1)K_1)$ by subdividing a pendant edge, as in the graph on the left in Figure~\ref{fig: pairs}.

If $G$ is a realization of $e$ in which a vertex $v$ of maximum degree has a non-neighbor with degree 2 (which can only happen if $a \geq 1$, since the degree-2 vertex cannot have neighbors among the  vertices of degree 1), then deleting $v$ yields a graph with degree sequence $2^1 1^{2a} 0^{b-1}$, which has a unique realization in $P_3+(a-1)K_2+(b-1)K_1$. Thus $G$ is the graph obtained from $K_1 \vee (aK_2+(b-1)K_1)$ by subdividing an edge of a triangle, as in the graph on the right in Figure~\ref{fig: pairs}.
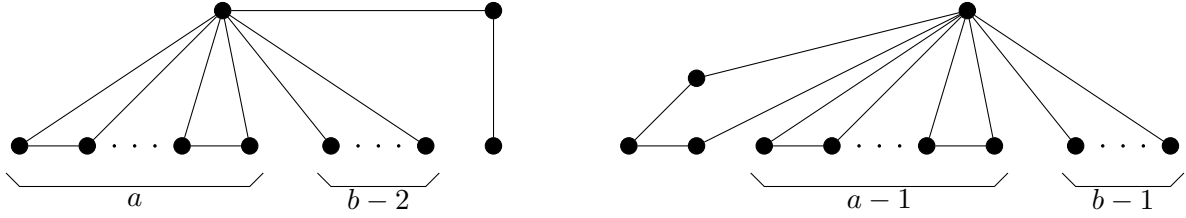
\begin{figure}
    \centering
\begin{tikzpicture}[scale=1.8]
     \node (1) [draw,shape=circle, fill=black, scale=.6] at (0,0) {};
     \node (2) [draw,shape=circle, fill=black, scale=.6] at (0.5,0) {};
     \node (3) [draw,shape=circle, fill=black, scale=.6] at (1.2,0) {};
     \node (4) [draw,shape=circle, fill=black, scale=.6] at (1.7,0) {};
     \node (5) [draw,shape=circle, fill=black, scale=.6] at (2.3,0) {};
     \node (6) [draw,shape=circle, fill=black, scale=.6] at (3,0) {};
     \node (7) [draw,shape=circle, fill=black, scale=.6] at (3.5,0) {};
     \node (8) [draw,shape=circle, fill=black, scale=.6] at (1.5,1) {};
     \node (9) [draw,shape=circle, fill=black, scale=.6] at (3.5,1) {};
    \draw (1)--(2); \draw (3)--(4);
    \draw (1)--(8); \draw (2)--(8); \draw (3)--(8);
    \draw (4)--(8); \draw (5)--(8); \draw (6)--(8);
    \draw (7)--(9); \draw (8)--(9);
    \draw (-0.1,-0.2) -- (0,-0.3) -- (1.7,-0.3) -- (1.8,-0.2);
    \draw (2.2,-0.2) -- (2.3,-0.3) -- (3,-0.3) -- (3.1,-0.2);
    \node at (0.85,-0.4) {$a$};
    \node at (2.65,-0.4) {$b-2$};
    \draw [fill=black] (0.7,0) circle (0.25pt);
    \draw [fill=black] (0.85,0) circle (0.25pt);
    \draw [fill=black] (1,0) circle (0.25pt);
    \draw [fill=black] (2.5,0) circle (0.25pt);
    \draw [fill=black] (2.65,0) circle (0.25pt);
    \draw [fill=black] (2.8,0) circle (0.25pt);
    
    \node (11) [draw,shape=circle, fill=black, scale=.6] at (5.5,0) {};
    \node (12) [draw,shape=circle, fill=black, scale=.6] at (6,0) {};
    \node (13) [draw,shape=circle, fill=black, scale=.6] at (6.7,0) {};
    \node (14) [draw,shape=circle, fill=black, scale=.6] at (7.2,0) {};
    \node (15) [draw,shape=circle, fill=black, scale=.6] at (7.8,0) {};
    \node (16) [draw,shape=circle, fill=black, scale=.6] at (8.5,0) {};
    \node (17) [draw,shape=circle, fill=black, scale=.6] at (4.5,0) {};
    \node (18) [draw,shape=circle, fill=black, scale=.6] at (7,1) {};
    \node (19) [draw,shape=circle, fill=black, scale=.6] at (5,0) {};
    \node (10) [draw,shape=circle, fill=black, scale=.6] at (5,0.5) {};
    \draw (11)--(12); \draw (13)--(14);
    \draw (11)--(18); \draw (12)--(18); \draw (13)--(18);
    \draw (14)--(18); \draw (15)--(18); \draw (16)--(18);
    \draw (17)--(19); \draw (18)--(19);
    \draw (17)--(10); \draw (18)--(10);
    \draw (5.4,-0.2) -- (5.5,-0.3) -- (7.2,-0.3) -- (7.3,-0.2);
    \draw (7.7,-0.2) -- (7.8,-0.3) -- (8.5,-0.3) -- (8.6,-0.2);
    \node at (6.35,-0.4) {$a-1$};
    \node at (8.15,-0.4) {$b-1$};
    \draw [fill=black] (6.2,0) circle (0.25pt);
    \draw [fill=black] (6.35,0) circle (0.25pt);
    \draw [fill=black] (6.5,0) circle (0.25pt);
    \draw [fill=black] (8,0) circle (0.25pt);
    \draw [fill=black] (8.15,0) circle (0.25pt);
    \draw [fill=black] (8.3,0) circle (0.25pt);
\end{tikzpicture}
    \caption{The two realizations of $(b+2a-1)^1 2^{2a+1} 1^{b-1}$.}
    \label{fig: pairs}
\end{figure}

Inspection shows that neither realization of $e$ contains $aK_2+bK_1$ as an induced subgraph, so $e$ is forcibly $\{aK_2+bK_1\}$-free. Since $d \succeq e$ and $\F$ is dominance monotone, $B$ must be induced in one of the graphs in Figure~\ref{fig: pairs}.
\end{proof}

\begin{lem}\label{lem: third construction}
If $\mathcal{F}$ is a dominance monotone set containing $aK_2$ or $K_1 \vee aK_2$, then $\mathcal{F}$ contains an induced subgraph of at least one of the realizations of $\varepsilon=(2a-1)^1 3^1 2^{2a-1}$ (see Figure~\ref{Realization2}); any such induced subgraph is $\{aK_2\}$-free.
\begin{figure}
    \centering
\includegraphics[width=8cm,scale=4]{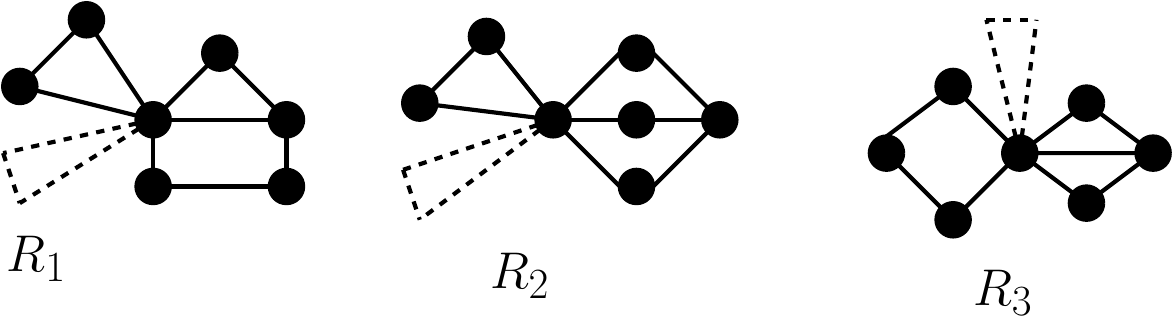}
    \caption{All possible realizations of $\varepsilon=(2a-1)^1 3^1 2^{2a-1}$. In realizations $R_1$ and $R_2$, $a-2$ triangles are attached to a vertex of degree 3 in a realization of $33222$. In realization $R_3$, $a-3$ triangles and a single $C_4$ are attached to a dominating vertex in the diamond.} \label{Realization2}
\end{figure}

If $H$ is a graph that is induced in one of the realizations of $\varepsilon$ and $H$ has a dominating vertex, then $H$ is one of the following:
\begin{itemize}
    \item $K_1 \vee (pK_2 + qK_1)$, where $p \leq a-1$ (and $p=a-1$ only if $q \leq 1$), and $p+q \leq a+1$;
    \item $K_1 \vee (P_3+pK_2+qK_1)$, where $p \leq a-3$ and $p+q \leq a-1$ (this possibility only arises if $a \geq 3$).
\end{itemize}

\noindent If $H$ is an induced subgraph of a realization of $\varepsilon$ with $\Delta(H) \leq 2$, then it satisfies the following:
  \begin{itemize}
      \item if $\Delta(H) \leq 1$, then $H= sK_2+tK_1$, where $s \leq a-1$ (and $s = a-1$ only if $t \leq 1$) and $s+t \leq a+1$. 
    \item if $\Delta(H)= 2$, then $H$ is one of the following:
    \begin{itemize}
        \item $K_3$ or $C_4$;
        \item $P_4+cK_2+dK_1$, where 
        $c+d \leq a-2$;
        \item $P_3+cK_2+dK_1$ where $c+d \leq a-1$ (where $c+d=a-1$ only if $a\geq 3$);
      \item $2P_3+cK_2+dK_1$ where $c+d \leq a-3$ (this possibility only arises if $a \geq 3$).
    \end{itemize}
  \end{itemize}
\end{lem}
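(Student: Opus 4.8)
The plan is to hang everything on one majorization step and then reduce every structural claim to an explicit analysis of the realizations of $\varepsilon$. For the opening assertion, I would take $d=(2a)^1 2^{2a}$ and $e=\varepsilon=(2a-1)^1 3^1 2^{2a-1}$; sliding one box from the first row to the second turns $F(d)$ into $F(e)$, so $d\succeq e$ by Muirhead's Lemma. The sequence $d$ is realized by $K_1\vee aK_2$, which contains both $aK_2$ (delete the dominating vertex) and $K_1\vee aK_2$ itself as induced subgraphs, so $d$ is not forcibly $\mathcal{F}$-free. Since dominance monotonicity makes the forcibly $\mathcal{F}$-free sequences upward-closed, its contrapositive forces $e$ to be non-forcibly $\mathcal{F}$-free, giving a realization of $\varepsilon$ that induces a member of $\mathcal{F}$. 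For the $\{aK_2\}$-freeness I would argue purely from the degree sequence: the degree-$(2a-1)$ vertex $v$ has a unique non-neighbor, so an induced matching of size $a$ (which uses $2a$ of the $2a+1$ vertices) cannot contain $v$, hence would have to be a perfect induced matching of $G-v$; but $G-v$ always retains a vertex of degree at least $2$ (the image of the degree-$3$ vertex), so no such matching exists. Thus every realization, and hence every induced subgraph of one, is $\{aK_2\}$-free.

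The technical core is to prove that the realizations are exactly $R_1,R_2,R_3$. Fixing $v$, its unique non-neighbor $w$, the degree-$3$ vertex $u$, and the degree-$2$ vertices, I would pass to $G-v$: every neighbor of $v$ drops one in degree while $w$ keeps its degree, so the degree sequence of $G-v$ is pinned down by whether $w=u$ or $w$ is a degree-$2$ vertex. If $w=u$, then $G-v$ has one degree-$3$ vertex and $2a-1$ degree-$1$ vertices, forcing $G-v=K_{1,3}+(a-2)K_2$ and $G=R_2$. If $w$ is a degree-$2$ vertex, then $G-v$ has exactly the two degree-$2$ vertices $u,w$ and $2a-2$ degree-$1$ vertices; two degree-$2$ vertices cannot close a cycle, so $G-v$ is a union of paths, and $u,w$ lie either on one common $P_4$ (yielding $R_1$) or on two separate $P_3$'s (yielding $R_3$, which needs $a\geq 3$). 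Restoring $v$ recovers the house, $K_{2,3}$, and diamond-plus-$C_4$ pictures of the figure.

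For the subgraphs with a dominating vertex, if the dominating vertex $h$ differs from $v$ then $\deg_G h\le 3$ forces $|V(H)|\le 4$, a finite check; otherwise $h=v$ and $H=v\vee H'$ with $H'$ an induced subgraph of $G[N(v)]$. Computing $G[N(v)]$ gives $(a-1)K_2+K_1$ for $R_1$, $(a-2)K_2+3K_1$ for $R_2$, and $P_3+(a-3)K_2+2K_1$ for $R_3$; reading off their induced subgraphs and unioning over the three realizations produces exactly the families $K_1\vee(pK_2+qK_1)$ and $K_1\vee(P_3+pK_2+qK_1)$. Here the bound $p+q\le a+1$ is forced by $R_2$ (its $a+1$ clique-components cap the component count), the $P_3$-family arises only from $R_3$, and the corner condition $p=a-1\Rightarrow q\le1$ comes from $R_1$, where using all $a-1$ edges leaves only the isolated vertex.

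For the $\Delta(H)\le 2$ case I would again split on whether $v\in V(H)$. If $v\notin V(H)$ then $H$ is induced in one of $P_4+(a-2)K_2$, $K_{1,3}+(a-2)K_2$, or $2P_3+(a-3)K_2$, and imposing $\Delta\le2$ (so at most a $P_3$ is extracted from the star) reads off the $sK_2+tK_1$, $P_4+\cdots$, $P_3+\cdots$, and $2P_3+\cdots$ families. If $v\in V(H)$, then $v$ has at most two neighbors in $H$, so its component is a single path or cycle; since $G-v$ is a forest in every realization, every cycle passes through $v$ and through its unique non-neighbor, which confines cyclic components to $K_3$ and $C_4$ and shows that a cyclic component already saturates $v$'s two allowed edges, so the only vertex that can be adjoined to it is the single non-neighbor of $v$. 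I expect this last case to be the main obstacle: the genuine work is the bookkeeping of how many $K_2$'s and $K_1$'s may be appended while keeping $\Delta\le2$, and reconciling the counts coming from the three realizations so that the sharp inequalities ($c+d\le a-2$, $c+d\le a-1$, $c+d\le a-3$) and the threshold conditions on $a$ emerge exactly; pinning down these extremal bounds, rather than any single conceptual step, is where care is required.
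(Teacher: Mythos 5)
Your opening moves coincide with the paper's: the same majorization pair $((2a)^1 2^{2a}, \varepsilon)$ with $K_1 \vee aK_2$ realizing the top sequence, the same argument that an induced $aK_2$ cannot use the vertex of degree $2a-1$ (its lone non-neighbor is too few) and hence would force $G-v \cong aK_2$, impossible because the degree-$3$ vertex survives with degree at least $2$, and the same classification of the realizations by deleting the maximum-degree vertex and splitting on whether its non-neighbor has degree $3$ (giving $3^1 1^{2a-1}$, hence $K_{1,3}+(a-2)K_2$ and $R_2$) or degree $2$ (giving $2^2 1^{2a-2}$, hence $P_4+(a-2)K_2$ or $2P_3+(a-3)K_2$ and $R_1$, $R_3$). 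All of that is correct. The gap is exactly where you flag it: the itemized lists of possible $H$ are the quantitative content of the lemma, the paper disposes of them with the word ``inspection,'' and your proposal defers the bookkeeping rather than doing it, so as written it is a plan, not a proof. Within that plan there is also a false sub-claim: it is not true that ``every cycle passes through $v$ and through its unique non-neighbor.'' The triangles $\{v,y_1,y_2\}$ formed with the attached copies of $K_2$ (and the triangle $vx_1u$ inside the house in $R_1$) pass through $v$ but avoid the non-neighbor $w$. The correct salvage is: $G-v$ is a forest, so every induced cycle passes through $v$; an induced cycle through $v$ contains at most one non-neighbor of $v$, hence has length at most $4$; therefore induced cycles are $K_3$ or $C_4$.

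More seriously, if you carry out your own deferred bookkeeping honestly, it conflicts with the statement you are trying to prove. Your observation that a cyclic component through $v$ saturates $v$'s two allowed edges, so that only the single non-neighbor $w$ can be adjoined, produces $K_3+K_1$ as an induced subgraph with $\Delta=2$ whenever $a \geq 3$: concretely, in $R_1$ with $a=3$ the set $\{v,y_1,y_2,w\}$ (an attached triangle together with the non-neighbor of $v$) induces $K_3+K_1$, since $w$'s only neighbors are $u$ and $x_4$; the same configuration appears in $R_2$ and $R_3$. But $K_3+K_1$ is on none of the branches of the lemma's $\Delta(H)=2$ list ($K_3$ and $C_4$ appear bare, with no $+cK_2+dK_1$ allowance, and the path families contain no triangle). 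So the enumeration in the statement, reduced by the paper to ``inspection,'' is itself missing the entry $K_3+K_1$ (arising only for $a\geq 3$), and your plan as designed cannot terminate at the stated itemization: you would either have to (wrongly) exclude this configuration or amend the list. The omission is repairable downstream --- the paper elsewhere handles $C=K_3+K_1$ via the counterexample pair $(32^31,2^5)$, and the same pair disposes of it here --- but this is precisely the kind of extremal case your proposal postpones, and it shows the postponed step is not routine.
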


\begin{proof}
Given that $\F$ contains $aK_2$ or $K_1 \vee aK_2$, consider the pair $((2a)^1 2^{2a}, \varepsilon)$. Clearly, $d_1 \succeq \varepsilon$, and $d_1$ is not forcibly $\mathcal{F}$-free, since its unique realization is the graph $K_1 \vee aK_2$. Since $\F$ is dominance monotone, this pair of degree sequences is not a counterexample pair, so $\F$ contains an induced subgraph of a realization of $\varepsilon$. 

To see that the induced subgraph is not $aK_2$ or $K_1 \vee aK_2$ when $a \geq 2$, it suffices to realize that the maximum degree vertex in a realization of $\varepsilon$ cannot belong to an induced copy of $aK_2$, for it is adjacent to all but one vertex. Thus an induced copy of $aK_2$ must contain all the other vertices, which is impossible since the degree-3 vertex is adjacent to at least two vertices of degree 2. 
 
In any realization $G$ of $\varepsilon$ the vertex $u$ of maximum degree is adjacent to all but one vertex $v$ of $G$. If $v$ has degree $2$ in $G$, then the graph $G-u$ has degree sequence $2^2 1^{2a-2}$ and hence is isomorphic to either $P_4+(a-2)K_2$ or $2P_3+(a-3)K_2$, and the graph $G$ is of the type shown in realizations $R_1$ or $R_3$ in Figure~\ref{Realization2}. If instead $v$ has degree 3 in $G$, then the degree sequence of $G-u$ is $3^1 1^{2a-1}$ and hence $G-u$ is $K_{1,3} + (a-2)K_2$, leading $G$ to be of the form shown in $R_2$ in Figure~\ref{Realization2}.  

Inspection of the realizations of $\varepsilon$ yields the possibilities for $H$ if $H$ is induced in a realization of $\varepsilon$ and has a dominating vertex or has maximum degree at most 2.
 \end{proof}

With these conditions on $\F$ established, we organize the rest of the proof of the necessity of (i) and (ii) in Theorem~\ref{thm: triples} by recalling that $B$ has a dominating vertex or is $(|V(B)|-2)$-regular. We will handle the two possibilities for the structure of $B$ in separate subsections.


\subsection{Case: $B$ has a dominating vertex}

We begin with two helpful lemmas on dominance monotone sets containing graphs of certain types.

\begin{figure}
    \centering
\includegraphics[width=8cm,scale=4]{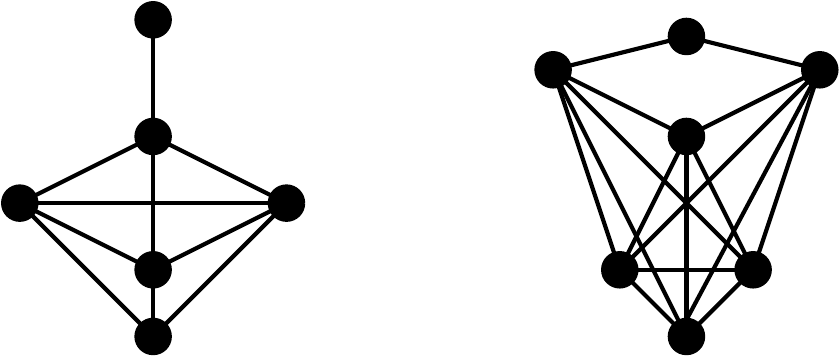}
    \caption{The unique realizations of $4^4 3^1 1^1$ and $5^6 2^1$.}
    \label{Realization3}
\end{figure}

\begin{lem} \label{lem: P4 K6 subdivided}
If $\F$ is a dominance monotone set containing $P_4$, then $\F$ contains an induced subgraph of the graph obtained by subdividing an edge of $K_6$.
\end{lem}
\begin{proof}
Since $\F$ is dominance monotone, the degree sequences $(6^1 5^4 4^1 2^1, 5^6 2^1)$ do not form a counterexample pair; note that a realization of $6^1 5^4 4^1 2^1$ is obtained by adding a dominating vertex to $4^4 3^1 1^1$, which has a realization inducing $P_4$ as shown in Figure~\ref{Realization3}, and the unique realization of $5^6 2^1$ is obtained by subdividing an edge of $K_6$.
\end{proof}

\begin{lem}\label{lem: P3aK2bK1} Let $\mathcal{F}$ be a dominance monotone set containing $P_3+pK_2+qK_1$ or $K_1 \vee (P_3+pK_2+qK_1)$. If $\F$ contains $K_1 \vee P_3$ (i.e., if $q=p=0)$, then $\F$ contain an induced subgraph of $C_5$. If $q=0$ and $p \geq 1$, then $\mathcal{F}$ must contain an induced subgraph $H$ of at least one of the realizations of $e_2= (2p+2)^1 4^1 2^{2p+2}$ in Figure~\ref{Realization17}. 
If $q \geq 1$ then $\mathcal{F}$ must contain an induced subgraph of $K_1 \vee ((p+2)K_2+(q-1)K_1)$.  

If $p+q \geq 1$, then the induced subgraphs described are $\{P_3+pK_2+qK_1\}$-free. Moreover, if $q=0$ and $p\geq 1$, then $H$ satisfies the following:
\begin{itemize}
    \item if $\Delta(H) \leq 1$, then $H=sK_2+tK_1$, where $s\leq p$ and $s+t\leq p+3$. 
    \item if $\Delta(H)=2$, then $H$ is one of the following.
    \begin{itemize}
        \item $K_3$, $C_4$, or $P_4$;
        \item $P_3+sK_2+tK_1$ for some $s,t$ such that $s \leq p-1$ and $s+t \leq p+1$;
        \item $2P_3+sK_2+tK_1$ for some $s,t$ such that $s+t \leq p-2$.
    \end{itemize}
\end{itemize}
\end{lem}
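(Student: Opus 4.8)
Throughout, write $G_0 = P_3 + pK_2 + qK_1$, so that the hypothesis supplies either $G_0$ or $K_1 \vee G_0$ as a member of $\F$. The plan is to handle the three cases by the template of Lemmas~\ref{lem: second construction} and~\ref{lem: third construction}: in each case I produce a pair $(d,e)$ of degree sequences with $d \succeq e$ such that $d$ is not forcibly $\F$-free, and then invoke the dominance monotonicity of $\F$ to conclude that $e$ is not forcibly $\F$-free either, so that some realization of $e$ induces a member $H$ of $\F$. In every case the realization of $d$ I use is $K_1 \vee G_0 = K_1 \vee (P_3+pK_2+qK_1)$ (for $p=q=0$ this is the diamond, of degree sequence $3322$): this graph equals the hypothesized member $K_1 \vee G_0$ of $\F$ and, after deletion of its dominating vertex, induces $G_0$, so it witnesses the failure of forcible $\F$-freeness whichever of the two graphs the hypothesis provides. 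The member $H$ thus produced is the asserted induced subgraph.

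For the concrete pairs I take: when $p=q=0$, $d = 3322$ and $e = 2^5$; when $q=0$ and $p\ge1$, $d=(2p+3)^1 3^1 2^{2p+2}$ and $e_2=(2p+2)^1 4^1 2^{2p+2}$; and when $q\ge1$, $d=(2p+q+3)^1 3^1 2^{2p+2} 1^q$ and $e=(2p+q+3)^1 2^{2p+4} 1^{q-1}$. In each case $d\succeq e$ follows from Muirhead's Lemma: for $q=0,p\ge1$ and for $q\ge1$ by moving a single box downward (from the first row to the second, respectively from the degree-$3$ row to a degree-$1$ row), and for $p=q=0$ by moving two boxes down to form a new row. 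When $p=q=0$ the unique realization of $2^5$ is $C_5$, giving the first conclusion. When $q\ge1$, deleting the unique dominating vertex from a realization of $e$ leaves degree sequence $1^{2p+4}0^{q-1}$, whose only realization is $(p+2)K_2+(q-1)K_1$; hence $e$ is realized uniquely by $K_1\vee((p+2)K_2+(q-1)K_1)$, yielding the third conclusion.

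To prove the forcible $\{G_0\}$-freeness claimed when $p+q\ge1$ I argue by graph type. When $q\ge1$, in $K_1\vee((p+2)K_2+(q-1)K_1)$ every induced $P_3$ must be centered at the dominating vertex (a $K_2$-vertex has its two neighbors adjacent, and a $K_1$-vertex has degree $1$, so neither can be a center); since that vertex is adjacent to all others, no vertex is left to be non-adjacent to the $P_3$, so no induced $P_3$ together with a disjoint nonempty part exists, and $G_0$ is not induced. When $q=0$ and $p\ge1$, I use a counting argument valid in any realization $G$ of $e_2$: the vertex $u$ of degree $2p+2=|V(G)|-2$ is too highly connected to lie in an induced copy of $G_0=P_3+pK_2$ (its degree there would exceed $2=\Delta(G_0)$), so such a copy would have to be all of $G-u$; but the degree sequence of $G-u$ is $4^1 1^{2p+2}$ or $3^1 2^1 1^{2p+1}$, never equal to $2^1 1^{2p+2}$, the degree sequence of $P_3+pK_2$. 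Since $\{G_0\}$-freeness is hereditary, the induced subgraph $H\in\F$ inherits it.

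The remaining and hardest task is the structural description of $H$ when $q=0,p\ge1$. First I enumerate the realizations of $e_2$ shown in Figure~\ref{Realization17}: letting $u$ again be the degree-$(2p+2)$ vertex, which has exactly one non-neighbor, I split on whether that non-neighbor has degree $4$ or degree $2$. These give $G-u$ of degree sequence $4^1 1^{2p+2}$ or $3^1 2^1 1^{2p+1}$, each admitting only a short list of realizations ($K_{1,4}+(p-1)K_2$ in the first case; $K_{1,3}+P_3+(p-2)K_2$ or a five-vertex fork plus $(p-1)K_2$ in the second), and reattaching $u$ recovers all realizations of $e_2$. Then, for each realization, I read off every induced subgraph of maximum degree at most $2$. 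For $\Delta(H)\le1$, $H$ is a disjoint union $sK_2+tK_1$, and the near-domination by $u$ together with the single extra high-degree vertex bounds the largest induced matching by $p$ and the number of mutually non-adjacent components by $p+3$, giving $s\le p$ and $s+t\le p+3$. For $\Delta(H)=2$, $H$ is a disjoint union of paths and cycles; the same structural features restrict the cycles to $K_3$ and $C_4$, limit induced paths to $P_4$, and cap the number of $P_3$'s at two while bounding the accompanying matching and independent vertices, yielding exactly the listed families with their stated bounds. I expect this last inspection to be the principal obstacle: although each step is elementary, it demands a careful, lengthy case analysis over the three realization shapes and precise bookkeeping of matching-size and component-count bounds, exactly as in the closing inspection of Lemma~\ref{lem: third construction}.
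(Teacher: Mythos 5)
Your proposal is correct and follows essentially the same route as the paper's proof: the same three pairs $(3322,2^5)$, $\bigl((2p+3)^1 3^1 2^{2p+2},\,(2p+2)^1 4^1 2^{2p+2}\bigr)$, and $\bigl((2p+q+3)^1 3^1 2^{2p+2}1^q,\,(2p+q+3)^1 2^{2p+4}1^{q-1}\bigr)$, the same enumeration of the realizations of $e_2$ by deleting the vertex of degree $2p+2$ and classifying the residual degree sequences $4^1 1^{2p+2}$ and $3^1 2^1 1^{2p+1}$, and the same concluding inspection. Your only departures are cosmetic improvements: you spell out the $\{P_3+pK_2+qK_1\}$-freeness arguments that the paper dismisses as ``inspection,'' and your use of the diamond realization of $3322$ covers both hypotheses ($P_3\in\F$ or $K_1\vee P_3\in\F$) directly, whereas the paper invokes the standing assumption $P_3\notin\F$.
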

\begin{proof}
When $q=p=0$, the set $\F$ contains $K_1 \vee P_3$, since $\F$ does not contain $P_3$. Since $(3322,22222)$ is not a counterexample pair, $\F$ must contain an induced subgraph of $C_5$.

When $q \neq 0$, it suffices to realize that $(d,e_1)$ is not a counterexample pair, where $d=(2p+q+3)^1 3^1 2^{2p+2} 1^q$ (the degree sequence of $K_1 \vee P_3+pK_2+qK_1$) and $e_1= (2p+q+3)^1 2^{2p+4} 1^{q-1}$ (the degree sequence of $K_1 \vee ((p+2)K_2+(q-1)K_1)$.

When $q=0$ and $p\geq 1$, consider the pair $(d, e_2)$ where $d$ is as above and $e_2=(2p+2)^1 4^1 2^{2p+2}$; since this is not a counterexample pair, $\F$ contains an induced subgraph of a realization of $e_2$. We determine the realizations of $e_2$ as follows. Let $H$ be a realization of $e_2$. Let $u$ and $v$ be the vertices of maximum degree and degree 4, respectively. Observe that $u$ is adjacent to all but one of the other vertices in $H$. If $u$ is not adjacent to $v$, then $H-u$ has degree sequence $4^1 1^{2p+2}$, which is uniquely realized by $K_{1,4}+(p-1)K_2$, and $H$ therefore has the form shown in the first graph in Figure~\ref{Realization17}. If $u$ is adjacent to $v$, then $H-u$ has degree sequence $3^1 2^1 1^{2p+1}$, which has  realizations $T+(p-1)K_2$, where $T$ is the tree obtained by attaching two pendant vertices to an endpoint of $P_3$, and $K_{1,3}+P_3+(p-2)K_2$ (which is possible only if $p \geq 2$). In these cases the graph $H$ has a form shown in the second and third graphs, respectively, in Figure~\ref{Realization17}.

That the realizations of $e_1$ and of $e_2$ are all $\{P_3+pK_2+qK_1\}$-free when $p+q \geq 1$ can be easily verified by inspection. Inspection also confirms the stated conditions on $H$ when $\Delta(H) \leq 2$.
\end{proof}

\begin{figure}
    \centering
\includegraphics[width=8cm,scale=4]{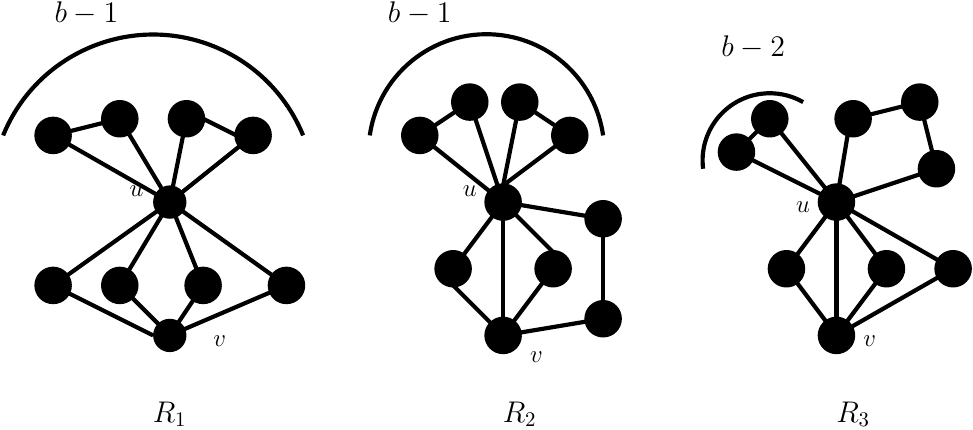}
    \caption{All possible realizations of $e_2= (2p+2)^1 4^1 2^{2p+2}$.}
    \label{Realization17}
\end{figure}


With our preliminary lemmas established, recall that $A=aK_2+bK_1$, that $B$ has a dominating vertex, and that $C$ is induced in a disjoint union of cycles on at most $3a+2b-1$ vertices. We proceed by subcases on the number of isolated vertices in $A$. 

\subsubsection{Subcase 1: $b=0$.} \label{subsub}

Here $A= aK_2$, where $a \geq 2$ by our assumption that $A$ has at least three vertices, and $C$ is induced in a disjoint union of cycles on at most $3a-1$ vertices. By Lemma~\ref{lem: third construction}, $\mathcal{F}$ contains an induced subgraph in at least one of the realizations of $\varepsilon= (2a-1)^1 3^1 2^{2a-1}$, and this graph is not $A$. Therefore, either $B$ or $C$ is induced in at least one realization of $\varepsilon$. 

Suppose first that $B$ is induced in at least one realization of $\varepsilon$. By Lemma~\ref{lem: third construction} $B$ is equal to either $K_1 \vee (pK_2+qK_1)$ or $K_1 \vee (P_3+pK_2+qK_1)$, where $p+q$ is bounded according to the values of $p$ and $q$. We will consider each of these possibilities for $B$ in turn.

\textit{Case: $B= K_1 \vee pK_2$, where $p \leq a-1$}. We may assume that $p \geq 2$, since $B$ is assumed not to be $K_3$. 
Moreover, by Lemma~\ref{lem: third construction}, $\mathcal{F}$ must contain an induced subgraph in at least one of the realizations of $\varepsilon'=(2p-1)^13^12^{2p-1}$; each such realization is $\{pK_2\}$-free and hence $\{A,B\}$-free, so $C$ is induced in some realization of $\varepsilon'$. If $\Delta(C) \leq 1$, then $C= sK_2+tK_1$ for some integers $s,t$ bounded as in Lemma~\ref{lem: third construction}. If $t=0$, then $C$ is induced in $A$, contrary to our assumption, so $C=sK_2+tK_1$ where $ t \geq 1$ and $s \leq p-1$. Thus, by Lemma~\ref{lem: second construction}, $\mathcal{F}$ must contain an induced subgraph of a graph obtained by subdividing one edge of $K_1 \vee (sK_2+(t-1)K_1)$, but none of $A$, $B$, or $C$ is such an induced subgraph, a contradiction.

If $\Delta(C)=2$, Lemma~\ref{lem: third construction} lists all graphs that $C$ can be. Our assumptions exclude the possibilities of $C$ being one $K_3$ or $C_4$. If $C=P_4+s'K_2+t'K_1$, where $s'+t' \leq p-2$, consider the pairs $(d,e_1)$ and $(d,e_2)$, where 
\begin{align*}
d &= (2s'+t'+4)^1 3^2 2^{2s'+2} 1^{t'},\\
e_1 &= (2s'+3)^1 4^1 3^1 2^{2s'+2},\\
e_2 &= (2s'+t'+3)^1 3^2 2^{2s'+3} 1^{t'-1}.
\end{align*}
Note that $d$ is the degree sequence of $K_1 \vee C$. If $t'=0$, then $e_1$ is forcibly $\{A\}$-free because otherwise $2a \leq 2s'+5$, yielding $a-2 \leq s' \leq p-2 \leq a-3$, a contradiction. The sequence $e_1$ is also forcibly $\{B\}$-free since otherwise $2p+1 \leq 2s'+5 \leq 2(p-2)+5$, implying that $B$ is the realization of $e_1$, a contradiction since $B$ has a dominating vertex. Finally, any realization of $e_1$ has exactly one vertex more than $C$; if $s' \geq 1$, then deleting any vertex from such a realization leaves a subgraph with maximum degree at least 3, so $e_1$ is forcibly $\{C\}$-free. Thus $(d,e_1)$ is a counterexample pair if $t'=0$ unless $s'=0$ and hence $C=P_4$. In this case, the result of Lemma~\ref{lem: P4 K6 subdivided} requires that $\F$ contain an induced subgraph of the graph obtained from subdividing an edge of $K_6$. None of $aK_2$, $K_1 \vee pK_2$, or $P_4$ fits this description, which is a contradiction.

If $t'\geq 1$, then $e_2$ is forcibly $\{A\}$-free, since otherwise \[2a \leq 2s'+t'+5 < 2(s'+t')+5 \leq 2(p-2)+5 \leq 2a-1,\] a contradiction. The sequence $e_2$ is forcibly $\{B\}$-free, since otherwise \[ 2p+1 \leq 2s'+t'+5 < 2(s'+t')+5 \leq 2(p-2)+5 = 2p+1,\] a contradiction. The sequence $e_2$ is forcibly $\{C\}$-free because any realization of $e_2$ has exactly one more vertex than $C$, but deleting a single vertex from such a realization cannot leave $t'$ isolated vertices. Thus $(d,e_2)$ is a counterexample pair, and this contradiction concludes the possibility that $C=P_4+s'K_2+t'K_1$.

Suppose instead that, as in Lemma~\ref{lem: third construction}, $C=P_3+s'K_2+t'K_1$, where $s'+t' \leq p-1$ and $s'+t'=p-1$ only if $p \geq 3$. Consider the pairs $(d,e_1)$ and $(d,e_2)$, where \begin{align}\label{eq: P3+sK2+tK1} \begin{split}
d &= (2s'+t'+3)^1 3^1 2^{2s'+2} 1^{t'},\\
e_1 &= (2s'+2)^1 4^1 2^{2s'+2},\\
e_2 &= (2s'+t'+2)^1 3^1 2^{2s'+3} 1^{t'-1}.
\end{split}
\end{align}
The arguments here proceed in much the same way as in the last paragraph, except in the following few ways. To conclude that $e_1$ is forcibly $\{A\}$-free we also note that if $2a \leq 2s'+4$, then $s'=p-1=a-2$, from which it follows that $A$ is a realization of $e_1$, a contradiction. To conclude that $e_1$ is forcibly $\{B\}$-free we note that if $2p+1\leq 2s'+4$, then $B$ can be obtained by deleting one vertex from a realization of $e_1$, and no such vertex deletion yields $B$. To conclude that $e_1$ is forcibly $\{C\}$-free we may assume that $s+t\geq 1$, since by assumption $C \neq P_3$. To conclude that $e_2$ is forcibly $\{B\}$-free, we note that if $2p+1\leq 2s'+t'+4$, then $B$ is a realization of $e_2$, a contradiction.

The above contradictions imply, by Lemma~\ref{lem: third construction}, that $C=2P_3+s'K_2+t'K_1$, where $s'+t' \leq p-3$ and $s'+t'=p-3$ only if $p \geq 3$. Consider the pairs $(d,e_1)$ and $(d,e_2)$, where 
\begin{align*}
d &= (2s'+t'+6)^1 3^2 2^{2s'+4} 1^{t'},\\
e_1 &= (2s'+5)^1 4^1 3^1 2^{2s'+4},\\
e_2 &= (2s'+t'+5)^1 3^2 2^{2s'+5} 1^{t'-1}.
\end{align*}
The arguments showing that $(d,e_1)$ and $(d,e_2)$ are counterexample pairs in the cases $t'=0$ and $t'\geq 1$, respectively, are again analogous to those in the case $C=P_4+s'K_2+t'K_1$ above. We omit the details and conclude that this possibility for $C$ also ends in contradiction.

\textit{Case: $B=K_1 \vee (pK_2+qK_1)$, where $p \leq a-1$ (and $p =a-1$ only if $q \leq 1$) and $p+q \leq a+1$.} By the previous case, we may assume that $q \geq 1$, and since $B$ is not $P_3$, we assume that $q \geq 3$ if $p=0$. 
Then by Lemma~\ref{lem: second construction}, either $A$ or $C$ must also be an induced subgraph of a graph obtained by subdividing an edge of $K_1 \vee (pK_2+(q-1)K_1)$. Since  $A$ is not an induced subgraph, $C$ is, besides being induced in the disjoint union of cycles having at most $3a-1$ vertices. If $\Delta(C) \leq 1$, then $C= sK_2+tK_1$ for some $s,t$ such that $s \leq p+1 \leq a$  and $s+t \leq p+q \leq a+1$. If $t=0$ then $C$ is induced in $A$, and if $s=0$ then $C$ is induced in $B$, contrary to our assumption, so we see that $s, t \neq 0$ and $t \geq 2$ when $s=1$ since $C$ has at least 3 vertices and is not $K_2+K_1$. Thus, by Lemma~\ref{lem: second construction}, $\F$ must contain an induced subgraph of a graph obtained by subdividing an edge of $K_1 \vee (sK_2+(t-1)K_1)$, but such a graph is $\{A,B,C\}$-free, a contradiction. If $\Delta(C) =2$, then since $C$ is not $P_3$ or $K_3$ or $C_4$, we see that $C$ is either $K_3+K_1$ or $P_4$ or $P_3+s'K_2+t'K_1$ where $s'+t' \geq 1$. 
For $C=K_3+K_1$, we find that $(32^31, 2^5)$ is a counterexample pair, a contradiction. When $C=P_4$, we find that $(6^1 5^4 4^1 2^1, 5^6 2^1)$ is a counterexample pair, a contradiction. For $C=P_3+s'K_2+t'K_1$, the degree sequences in~\eqref{eq: P3+sK2+tK1} form counterexample pairs for analogous reasons.







\textit{Case: $B=K_1 \vee (P_3+pK_2+qK_1)$, where $p \leq a-3$ and $p+q \leq a-1$}. As in Lemma~\ref{lem: third construction}, this case requires that $a \geq 3$.

Assume now that $p$ or $q$ is nonzero. By Lemma~\ref{lem: P3aK2bK1}, $\F$ contains an induced subgraph of $K_1 \vee ((p+2)K_2+(q-1)K_1)$ if $q \neq 0$, or an induced subgraph of a realization of $(2p+2)^1 4^1 2^{2p+2}$ if $q=0$. Neither $A$ nor $B$ can satisfy these requirements, so $C$ is the desired induced subgraph, and Lemma~\ref{lem: P3aK2bK1} implies that $C$ is $P_4$ or $sK_2+tK_1$ or $P_3+sK_2+tK_1$ or $2P_3+sK_2+tK_1$ for suitable $s,t$.

If $C=P_4$, then the pair $(2211, 21111)$ is a counterexample pair, a contradiction.

If $C=sK_2+tK_1$ (where Lemma~\ref{lem: P3aK2bK1} tells us $s \leq p+2$), then Lemmas~\ref{lem: second construction} and~\ref{lem: third construction} imply that $\F$ contains an $\{C\}$-free graph $H$ whose largest induced matching has size at most $s+1$. Since $s \leq p+2 \leq a-1$, the graph $H$ does not contain $A$ as an induced subgraph. Since $B$ contains the diamond as an induced subgraph, $H$ is $\{B\}$-free as well unless $H$ is contained in a graph of the form $R_3$ in Figure~\ref{Realization2} having at most $s-3$ triangles, forcing $p \leq s-3 \leq p-1$, a contradiction.

If $C=P_3+sK_2+tK_1$ (where Lemma~\ref{lem: P3aK2bK1} tells us $s \leq p-1$), then Lemma~\ref{lem: P3aK2bK1} implies that $\F$ contains a $\{C\}$-free graph $H$ that is induced in $K_1 \vee ((s+2)K_2 + (t-1)K_1)$ or in a realization of $(2s+2)^1 4^1 2^{2s+2}$. Since all such graphs have largest induced matchings of order at most $s+2$, and $s+2 \leq p+1 \leq a-2$, the graph $H$ is $\{A\}$-free. Since $\F$ is dominance monotone, $H$ must contain $B$ an induced subgraph. Now $K_1 \vee ((s+2)K_2+(t-1)K_1)$ contains no induced $K_1 \vee P_3$, as $B$ does, so $B$ must be induced in a realization of $(2s+2)^1 4^1 2^{2s+2}$. Note that only the realizations $R_2$ and $R_3$ in Figure~\ref{Realization17} contain $K_1 \vee P_3$ as an induced subgraph. Assume that $p+q \geq 1$. The unique vertex of $B$ with degree at least 4 must be the vertex $u$ of maximum degree in $R_2$ or $R_3$, and the unique vertex of degree 3 in $B$ is the vertex of second-highest degree in $R_2$ or $R_3$. In either realization, the remaining vertices adjacent to $u$ do not yield $pK_2+qK_1$ as an induced subgraph, a contradiction, since $B$ is induced in $H$.

If $C=2P_3+sK_2+tK_1$, (where Lemma~\ref{lem: P3aK2bK1} tells us $s+t \leq p-2$), then for $t \neq 0$ we claim that $\mathcal{F}$ must contain a $\{C\}$-free graph $H$ that is induced in $K_1 \vee (P_3 +(s+2)K_2+(t-1)K_1)$; for otherwise $(d, e)$ would be a counterexample pair, where $d= (2s+t+6)^1 3^2 2^{2s+4}1^t$ (the degree sequence of $K_1 \vee (2P_3+2K_2+tK_1)$) and $e= (2s+t+6)^1 3^1 2^{2s+6}1^{t-1}$, since the unique realization of $e$ is $K_1\vee (P_3+(s+2)K_2+(t-1)K_1)$, which contains only one induced $P_3$. It is not hard to see that in this graph the largest induced matching has order at most $s+3 \leq p\leq a-3$, so this graph is also $\{A\}$-free and hence must contain $B$ as an induced subgraph, since $\mathcal{F}$ is dominance monotone. However, since the realization contains exactly one diamond, this leaves only $s+t+1 \leq p-1$ vertices to obtain an induced $pK_2+qK_1$, which is not possible, a contradiction. If $t=0$, then $\mathcal{F}$ must contain a $\{C\}$-free graph $H'$ that is induced in $P_3+(s+2)K_2$, the unique realization of $e'= 21^{2s+6}$, for otherwise $(d', e')$ would be a counterexample pair, where $d'= 2^2 1^{2s+4}$, since the realization of $e'$ contains only one $P_3$. The largest induced matching in $P_3+(s+2)K_2$ is at most $s+3 \leq p+1\leq a-2.$ Thus $A$ is not induced in any realization of $e'$ and since $B$ has maximum degree at least 3, $B$ is not induced either, a contradiction.

\bigskip
The contradictions above show that $B$ is not induced in any realization of $\varepsilon$, so $C$ must be instead. Recall that $\Delta(C) \leq 2$. 

 If $\Delta(C) \leq 1$, then $C= sK_2+tK_1$ and by Lemma~\ref{lem: third construction}, $s+t \leq a+1$, and $s \leq a-2$ if $t>1$; otherwise $s \leq a-1$. If $t=0$, then $C$ is induced in $A$ contrary to our assumption, so $C= sK_2+tK_1$ where $t \geq 1$ and $t \geq 3$ when $s =0$. If both $s$ and $t$ are equal 1, then Lemma~\ref{lem: P3 K3 C4} applies, and $\F$ contains a dominance monotone singleton or pair. In any other case, by Lemma~\ref{lem: second construction} $\mathcal{F}$ must contain a $\{C\}$-free induced subgraph $H$ of a graph obtained by subdividing one edge of $K_1 \vee (sK_2+(t-1)K_1)$. A maximum induced matching in $H$ has at most $s$ edges if $t=1$ and $s+1$ edges if $t>1$. Since $s \leq a-1$, the graph $A$ is not induced in $H$. Then $B$ is induced in $H$, and since $B$ is assumed not to be $P_3$ or $K_3$, we have $B= K_1 \vee (s'K_2+t'K_1)$ for $s',t'$ such that $ s'\leq s$ and $s'+t' \leq s+t$. If $t'= 0$, then  Lemma~\ref{lem: third construction} shows that $\F$ contains a $\{B\}$-free graph $J$ that is induced in a realization of $\varepsilon' = (2s'-1)^1 3^1 2^{2s'-1}$. A maximum induced matching in $J$ has size at most $s'-1 <s<a$, so $J$ is $\{A,C\}$-free, a contradiction. If $t' \neq 0$, by Lemma~\ref{lem: second construction} $\F$ contains a $\{B\}$-free graph $J'$ that is induced in a graph obtained by subdividing an edge of a realization of $K_1 \vee (s'K_2+(t'-1)K_1)$. Again $J'$ is $\{A,C\}$-free, another contradiction.
 
If $\Delta(C) =2$, then by Lemma~\ref{lem: third construction} the graph $C$ is $P_3+sK_2+tK_1$ or $2P_3+sK_2+tK_1$ or $P_4 +sK_2+tK_1$ for suitably bounded values of $s,t$. 

If $C= P_3+sK_2+tK_1$ then $s \leq a-2$ and $s+t \leq a-1$. Consider the pair $(d,e)$, where $d=3^{1}2^{3s+2t+1}1^1$ and $e= 2^{3s+2t+3}$. Since $(d,e)$ is not a counterexample pair, $\mathcal{F}$ must contain an induced subgraph of one of the realizations of $e$, since $d$ has a realization inducing $C$, namely the graph obtained by adding edge $v_1v_4$ to the path $v_1v_2\cdots,v_{3s+2t+3}$. Every realization of $e$ is $\{A\}$-free, since otherwise $3(s+t+1) \leq 3a \leq 3s+2t+3$, which is a contradiction. Realizations of $e$ are also $\{B\}$-free since $\Delta(B) \geq 3$. Finally, deleting $s+t$ vertices from a realization of $e$ leaves at most $s+t-1$ components, so the realization is also $\{C\}$-free, a contradiction.

If $C=2P_3+sK_2+tK_1$, then $s \leq a-3$ and $s+t\leq a-3$. In arguments similar to those of the last paragraph, the set $\mathcal{F}$ must contain an induced subgraph of one of the realizations of $e=2^{3s+2t+7}$, but all such realizations are $\F$-free, a contradiction.

Hence $C= P_4 +sK_2+tK_1$ where $s \leq a-2$ and $s+t \leq a-2$. If both $s$ and $t$ are $0$, then $C=P_4$ and $\mathcal{F}$ must have an induced subgraph of $P_3+K_2$ (otherwise $(2211, 21111)$ is a counterexample pair). Since $\Delta(B) \geq 3$, the induced subgraph is $A$. By our previous assumptions on $A$ we conclude that $A=2K_2$, and by Lemma~\ref{lem: P3 K3 C4} we find $\mathcal{F}= \{2K_2, P_4, \rm{diamond}\}$. Otherwise, $s+t \geq 1$. Thus $\mathcal{F}$ must contain an induced subgraph of one of the realizations of $e=2^{3s+2t+4}$ and we arrive at a contradiction as before in the argument for the case $C=P_3+sK_2+tK_1$.

\subsubsection{Subcase 2: $b \geq 1$.} 

Since $A$ has at least three vertices, and $A$ is not $K_2+K_1$, assume that $a \geq 2$ if $b=1$ and $a\geq 1$ if $b=2$.

By Lemma~\ref{lem: second construction}, $\mathcal{F}$ must contain an induced subgraph of a graph obtained by subdividing an edge of $K_1 \vee (aK_2+(b-1)K_1)$.

If $B$ is induced in an edge-subdivided $K_1 \vee(aK_2+(b-1)K_1)$, then $B= K_1 \vee (pK_2+qK_1)$ for integers $p,q$ such that $p \leq a$ and $p+q \leq a+b-1$. By Lemma~\ref{lem: second construction}, $\mathcal{F}$ contains an induced subgraph $H$ of a graph obtained by subdividing an edge of  $K_1 \vee (pK_2+(q-1)K_1)$. This subgraph of $H$ must be $C$. Hence $C$ is induced in both the disjoint union of cycles having at most $3a+2b-1$ vertices and a graph obtained by subdividing an edge of $K_1 \vee (pK_2+(q-1)K_1)$ where $p \leq a$ and $p+q \leq a+b-1$. 

If $\Delta(C) \leq 1$, then $C= sK_2+tK_1$ for some $s,t$ such that $s \leq p$ and $s+t \leq p+q$. If $t=0$ then $C$ is induced in $A$, contrary to our assumption. A similar contradiction occurs if $s=0$. We assume that $s, t \neq 0$ (and as before, that $C$ is not $K_2+K_1$). By Lemma~\ref{lem: second construction}, $\mathcal{F}$ contains an induced subgraph $H'$ of a graph obtained by subdividing an edge of  $K_1 \vee (sK_2+(t-1)K_1)$, where $s \leq p \leq a$ and $s+t \leq p+q-1\leq a+b-2$. However, $A$ is not induced in any realization of $S'$ and neither is $B$, a contradiction.

If $\Delta(C) =2$, the graph $C$ contains vertex $u$ of maximum degree in $H$. Since $C$ is not $P_3$, $K_3$, or $C_4$, we have $C=P_4$. Since $(2211, 21111)$ is not a counterexample pair, $A = 3K_1$ or $A=K_2+2K_1$. However, when $A$ is $3K_1$ or $K_2+2K_1$ we have respectively $(43221,42222)$ and $(43322,33332)$ as counterexample pairs, another contradiction.

If $B$ is not induced in an edge-subdivided $K_1\vee (aK_2+(b-1)K_1)$, then $C$ must be, in addition to being induced in a disjoint union of cycles having at most $3a+2b-1$ vertices. We again arrive at a contradiction using exactly the same argument as above when $C$ was induced in an edge-subdivided $K_1 \vee (pK_2+(q-1)K_1)$.

\subsection{Case: No graph in $\F$ has a dominating vertex}

Recall that $A=aK_2+bK_1$, where $a,b \geq 0$, and that $\Delta(C) \leq 2$. By Corollary~\ref{cormaxmin}, since $B$ has no dominating vertex, it is $(|V(B)|-2)$-regular and $|B|$ is even. If $|V(B)|=4$ then $B$ is $C_4$, contrary to a previous assumption, so assume that $|V(B)| \geq 6$ and hence $\delta(B) \geq 4$. 

Since no graph in $\F$ has a dominating vertex, Theorem~\ref{thmcomp} implies that $\overline{\F} = \{\overline{A}, \overline{B}, \overline{C}\}$ is dominance monotone. If $b \geq 1$, then $\overline{A}$ has a dominating vertex, so the set $\overline{\F}$ was found in the previous subsection. Assuming that $\mathcal{\F}$ contains no dominance monotone singleton or pair, we conclude that $\overline{\F}$ is equal to $\{2K_2,P_4,\rm{diamond}\}$ and hence $\F=\{K_2+2K_1,C_4,P_4\}$. Suppose henceforth that $b=0$, i.e., that $A=aK_2$ for some $a \geq 2$.

By Lemma~\ref{lem: third construction}, $\mathcal{F}$ contains an induced subgraph of at least one of the realizations of 
$\varepsilon= (2a-1)^1 3^1 2^{2a-1}$, and this induced subgraph is not $A$. Since $\delta(B) \geq 4$, neither is $B$ induced in a realization of $\varepsilon$, and hence $C$ must be. We proceed by considering the cases $\Delta(C) \leq 1$ and $\Delta(C)=2$. 

The statement $\Delta(C) \leq 1$ implies that $C= sK_2+tK_1$, where $s \leq a-1$ (with equality only if $t=0$) and $s+t \leq a+1$ by Lemma~\ref{lem: third construction}. Since we assumed that $C$ is not induced in $A$, we have $t \neq 0$. Then Theorem~\ref{thmcomp} implies that $\overline{\F}$ is dominance monotone, and $\overline{\mathcal{F}}$ contains a graph with a dominating vertex. Thus the set $\overline{\F}$ was found in the previous subsection, where it was shown to be $\{2K_2,P_4,\rm{diamond}\}$; however, this is a contradiction, since $\F$ was assumed to have two graphs with maximum degree at most 1.

If $\Delta(C) = 2$, then by Lemma~\ref{lem: third construction} we have $C$ is $P_3+sK_2+tK_1$ or $2P_3+sK_2+tK_1$ or $P_4 +sK_2+tK_1$ for suitably bounded $s$ and $t$. We may handle these cases using arguments very similar to those at the end of Subsection~\ref{subsub}, noting that though $B$ does not have a dominating vertex, its degrees are high enough for the arguments to work the same way.

\section{Comments and questions}

All of the dominance monotone sets mentioned in Section~\ref{sec: intro} are forbidden subgraph sets for subclasses of the split graphs. The triples $\{2K_2,P_4,\rm{diamond}\}$ and $\{P_4,C_4,K_2+2K_1\}$ from Theorem~\ref{thm: triples}, which respectively do allow $C_4$ or $2K_2$, show that families obtained from forbiddding a dominance monotone set can contain non-split graphs.

We have characterized the dominance monotone sets of size at most 3. Larger dominance monotone sets are also possible; in fact, there are infinitely many and arbitrarily large such sets.

\begin{thm}
Let $t \geq 1$. If $\F_t$ is the set of all graphs of order $t$, and $\F'_t$ is the set of all graphs with exactly $t$ edges, then $\F_t$ and $\F'_t$ are dominance monotone.
\end{thm}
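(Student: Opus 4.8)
The plan is to treat the two families separately, reducing \emph{dominance monotonicity} in each case to a monotonicity property of a single numerical invariant of a dominance order. For $\F_t$ the relevant invariant is the number of vertices, and for $\F'_t$ it is the number of edges, which is constant on each $\D_{2m}$.

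First I would dispose of $\F_t$. The key observation is that a graph $G$ is $\F_t$-free precisely when $n(G)<t$: any $t$ of its vertices induce a graph of order $t$, so $G$ contains a member of $\F_t$ as an induced subgraph if and only if $n(G)\ge t$. Since every realization of a sequence in $\D_{2m}$ has the same number of vertices, namely the number of parts of that sequence, a sequence is forcibly $\F_t$-free exactly when it has fewer than $t$ parts. By Muirhead's Lemma, if $d\succeq e$ then $F(e)$ is obtained from $F(d)$ by moving boxes into lower (possibly new) rows, so $d$ has at most as many parts as $e$. Hence if $e$ is forcibly $\F_t$-free, so that $e$ has fewer than $t$ parts, then $d$ has fewer than $t$ parts and is forcibly $\F_t$-free as well. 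This settles $\F_t$.

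For $\F'_t$ I would exploit the fact that every realization of every sequence in $\D_{2m}$ has exactly $m$ edges, and that in particular the whole graph is always an induced subgraph with exactly $m$ edges. This immediately handles two regimes. If $m<t$, then no induced subgraph of any realization has as many as $t$ edges, so every sequence of $\D_{2m}$ is forcibly $\F'_t$-free and the forcibly $\F'_t$-free sequences form an upward-closed set trivially. If $m=t$, then the whole graph is itself a forbidden induced subgraph, so no sequence of $\D_{2m}$ is forcibly $\F'_t$-free and the required implication holds vacuously. The substantive regime is $m>t$, and this is where I expect the main obstacle to lie. Writing $\mathcal{I}(d)$ for the set of integers $k$ such that some realization of $d$ has an induced subgraph with exactly $k$ edges, one must show that $d\succeq e$ forces the implication $t\in\mathcal{I}(d)\Rightarrow t\in\mathcal{I}(e)$.

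To attack this crux I would reduce to a single covering step: since the graphic partitions form a downward-closed set in the majorization order, every partition between $d$ and $e$ is graphic, so it suffices to treat the case where $e$ arises from $d$ by moving one box down, lowering a part $d_i$ to $d_i-1$ and raising a smaller part $d_j$ to $d_j+1$. At the level of graphs this move can always be realized by a single edge swap: in any realization of $d$, a short counting argument (using $d_i>d_j$) produces a vertex $x$ adjacent to the vertex $u$ of degree $d_i$ but not to the vertex $w$ of degree $d_j$, and replacing the edge $ux$ by $wx$ yields a realization of $e$. The hard part will be to carry out this swap while preserving a witnessing induced subgraph with exactly $t$ edges, since the swap can alter the number of edges induced on a fixed vertex set by $\pm 1$. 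I would try to gain the needed control by choosing the realization of $d$ and the witnessing vertex set as freely as possible and by selecting the swap vertex $x$ so that the two modified edges lie either both inside or both outside the witnessing set; the configurations in which neither can be arranged are the step I expect to require the most care, and they are where I would concentrate the technical effort.
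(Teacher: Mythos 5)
Your treatment of $\F_t$ is correct and is essentially identical to the paper's: a graph contains a member of $\F_t$ as an induced subgraph exactly when it has at least $t$ vertices, so forcibly $\F_t$-free means fewer than $t$ parts, and Muirhead's Lemma gives $n\leq p$. For $\F'_t$, your cases $m<t$ and $m=t$ are also correct. But the $m>t$ regime, which you rightly flag as the crux, is not merely hard --- the implication you reduce to ($d\succeq e$ and $t\in\mathcal{I}(d)$ forces $t\in\mathcal{I}(e)$) is \emph{false} under the paper's induced-subgraph conventions, so no choice of realization, witnessing set, or swap vertex can rescue your edge-swap program. Concretely, take $t=3$ and the single box move $d=3221\succeq e=2222$ in $\D_8$: the unique realization of $e$ is $C_4$, whose induced subgraphs have $0$, $1$, $2$, or $4$ edges, so $e$ is forcibly $\F'_3$-free; yet the unique realization of $d$ is the paw (a triangle with a pendant vertex), which induces $K_3\in\F'_3$. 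Your proposed swap, applied to the paw, necessarily destroys the triangle witness and produces $C_4$, where no witness exists; this is exactly one of the ``configurations in which neither can be arranged.''

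It is worth seeing how the paper's own proof relates to this. The paper never enters your $m>t$ regime: it asserts in one line that a forcibly $\F'_t$-free sequence has realizations with fewer than $t$ edges, i.e., that every graph with at least $t$ edges contains a member of $\F'_t$ as an induced subgraph. That assertion is true for ordinary subgraph containment (any $t$ of the $m\geq t$ edges form a subgraph with exactly $t$ edges) but false for \emph{induced} containment, as $C_4$ with $t=3$, or $K_4$ with $t=2$, shows. Under the definition of forcibly $\F$-free given in Section 2 (induced subgraphs), the pair $(3221,2222)$ is therefore a genuine counterexample pair for $\F'_3$, and the $\F'_t$ half of the theorem holds as literally stated only after a repair --- e.g., reading $\F'_t$ as all graphs with \emph{at least} $t$ edges, or taking freeness with respect to subgraphs rather than induced subgraphs. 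Under either repair, forcibly $\F'_t$-free is equivalent to $m<t$, and since $m$ is constant on $\D_{2m}$, the paper's one-line argument (which subsumes your first two cases) finishes the proof and the regime $m>t$ never arises. In short: your instinct about where the difficulty lives was sound --- sounder than the paper's proof, which glosses over it --- but your proposal does not close the gap, and with the stated definitions it cannot be closed.
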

\begin{proof} Take $t \geq 1.$ Assume that $\F_t$ is the set of all graphs of order $t$. Let $d=(d_1, \cdots, d_n)$, $e=(e_1, \cdots, e_p)$ be two degree sequences such that $d \succeq e$ (terms of $d$ and $e$ are assumed to be positive integers). Assume further that $e$ is forcibly $\F_t$-free; that is no realization of $e$ contains an induced subgraph of order $t$. This implies that $p < t.$ From Muirhead's Lemma, we have $n \leq p < t$; thus $d$ must also be forcibly $\F_t$-free. Since $d$ and $e$ were arbitrary, we have our desired result for $\F_t$.

Likewise, if $d$ and $e$ are as above and $e$ is forcibly $\F'_t$-free, then realizations of $e$ have fewer than $t$ edges, so the sum of the terms of $e$ is less than $2t$ by the Handshaking Lemma. Since $d \succeq e$, the sum of terms in $d$ equals the same number, and so every realization of $d$ is $\F'_t$-free as well, establishing our result for $\F'_t$.
\end{proof}

Unfortunately, though larger dominance monotone sets clearly exist, it may be difficult to extend the approach used in this paper to families of larger cardinality without further conditions similar to Theorems~\ref{thmcomp} and~\ref{thmmaxdegree1} and Corollary~\ref{cormaxmin}.

Observe that all known dominance monotone sets $\F$ have the property that $\overline{\F}$ is dominance monotone, even when $\F$ contains a dominating vertex, so we conjecture that the condition in Theorem~\ref{thmcomp} is not necessary: the complements of graphs in any dominance monotone set form a dominance monotone set. The difficulty in proving this lies in the dominance order's degree sequences not containing any 0 terms; it seems difficult to modify the poset to allow 0 terms without undesirable consequences. 

\section*{Acknowledgments}
The authors wish to thank the anonymous referees for their careful reading of the manuscript and their helpful suggestions.

\end{document}